\numberwithin{equation}{section}
\newtheorem{dummy}{dummy}[section]
\newtheorem{theorem}[dummy]{Theorem}
\newtheorem{lemma}[dummy]{Lemma}
\newtheorem{proposition}[dummy]{Proposition}
\newtheorem{remark}[dummy]{Remark}
\def\C{\mathbb C}
\def\P{\mathbb P}
\def\Q{\mathbb Q}
\def\Z{\mathbb Z}
\def\AA{\mathcal A}
\def\CC{\mathcal C}
\def\EE{\mathcal E}
\def\HH{\mathcal H}
\def\KK{\mathcal K}
\def\MM{\mathcal M}
\def\OO{\mathcal O}
\def\TT{\mathcal T}
\def\={\;=\;}
\def\bal{\begin{aligned}}
\def\eal{\end{aligned}}
\def\be{\begin{equation}\label}
\def\ee{\end{equation}}
\newcommand{\udot}{{\:\raisebox{3pt}{\text{\circle*{1.5}}}}}
\def \bullet {\udot}
\title{Exceptional collections of line bundles on the Beauville surface}
\author{Sergey Galkin,
Evgeny Shinder}
\thanks{Sergey Galkin: 
Laboratory of Algebraic Geometry, 
National Research University Higher School of Economics, 
7 Vavilova Str., Moscow, Russia, 117312;
Universit\"at Wien, Fakult\"at f\"ur Mathematik; 
Independent University of Moscow; Moscow Institute of Physics and Technology
{\tt Sergey.Galkin@phystech.edu} \\
Evgeny Shinder: \emph{Corresponding author}.
Max Planck Institute for Mathematics, Vivatsgasse 7,
Bonn 53177, Germany,
{\tt shinder@mpim-bonn.mpg.de}}
\begin{document}

\maketitle

\begin{abstract}
We construct quasi-phantom admissible subcategories in the derived category of 
coherent sheaves on the Beauville surface $S$. 
These quasi-phantoms subcategories appear as right orthogonals to subcategories generated by 
exceptional collections of maximal possible length 4 on $S$. 
We prove that there are exactly 6 exceptional collections consisting of line bundles (up to a twist) 
and these collections are spires of two helices.

\medskip

Keywords: exceptional collection, quasi-phantom category, Beauville surface

\end{abstract}

\section{Introduction}

Bounded derived categories of coherent sheaves on algebraic varieties, their
admissible subcategories and semiorthogonal decompositions have
been studied intensively by Bondal, Kapranov, Kuznetsov, Orlov, and others
\cite{Bon}, \cite{BK}, \cite{BO}, \cite{Kap}, \cite{Kuz06}, \cite{Kuz09}.

It has been questioned which additive invariants of admissible geometric triangulated categories 
are conservative, that is do not vanish for non-zero categories.
Non-vanishing of the Hochschild homology of geometric admissible categories
has been conjectured by Kuznetsov in \cite{Kuz09} and non-vanishing
of the Grothendieck group has been conjectured by Bondal in early 90's (unpublished).
On the contrary, existence of geometric categories with vanishing
Hochschild homology ({\it quasi-phantoms}) has been indicated by Katzarkov in
\cite{Kat} and existence of geometric categories with
vanishing Grothendieck group ({\it phantoms}) has been conjectured by Diemer, Katzarkov and Kerr \cite{DKK},
both motivated by considerations from mirror symmetry.

Let us consider the simplest interesting case, that of a complex smooth projective surface $S$ of general type.
On one hand such a surface is not expected to admit a full exceptional collection in its bounded derived 
category $D^b(S)$.
On the other hand exceptional collections of maximal possible length $\dim H^*(S,\Q)$ 
seem to exist at least in some cases when $p_g(S) = q(S) = 0$. 
In such a case the orthogonal complement to the category generated by
the exceptional collection has vanishing Hochschild homology \cite{Kuz09},
torsion Grothendieck group and generally rather mysterious structure.

The first counterexample to Kuznetsov's conjecture
was given by B\"ohning, Graf von Bothmer and Sosna, who constructed exceptional
collections of length $11$ on the classical Godeaux surface ($p_g = q = 0, K^2 = 1, b_2 = 9$) \cite{BBS}.
Alexeev and Orlov \cite{AO} came up with exceptional collections of length $6$ on Burniat surfaces ($p_g = q = 0, K^2 = 6, b_2 = 4$).
Some of the fake projective planes ($p_g = q = 0, K^2 = 9, b_2 = 1$) are expected to admit exceptional collections of length $3$ \cite{GKMS}[Section $3$].

In this paper we consider yet another surface with similar properties, the Beauville surface $S$ \cite{Bea}. 
$S$ is a surface of general type with $p_g = q = 0, K^2 = 8, b_2 = 2$, constructed as follows.
Let $C$ and $C'$ be two copies of the Fermat quintic
\[
X^5 + Y^5 + Z^5 = 0, 
\]
acted upon by $G = (\Z/5)^2$ in two different ways.
We consider the product surface $T = C \times C'$ with the diagonal $G$-action.
The latter action turns out to be free for an appropriate choice of $G$-actions on $C$ and $C'$.
The Beauville surface $S$ is defined as a quotient $T/G$.
According to \cite{BaC}, Theorem 3.7 there are two non-isomorphic surfaces that can be obtained
this way. We chose one of these two models which we describe in detail
in Section 1.

One can find useful the analogy between the Beauville surface $S$ and the
quadric surface, that is to think of Beauville surface as a sort of a fake quadric.
First of all these two surfaces have the same numerical invariants ($p_g = q = 0, K^2 = 8, b_2 = 2$).
Furthermore, we prove in Section $2.3$ that 
the Picard group of $S$ is generated modulo torsion
by the bundles $\OO(1,0)$, $\OO(0,1)$ which come as pull-backs
from the factors $C$ and $C'$.
The Riemann-Roch formula on $S$ implies that
\[
\chi( \OO(i,j) ) = ( i - 1 ) (j - 1) 
\]
also in analogy with the quadric on which we have minus signs replaced by the plus signs.
A line bundle $L \in Pic(S)$ is called \emph{acyclic} if $H^*(S,L) = 0$,
for example line bundles $\OO(-1,k)$ and $\OO(k,-1)$ are acyclic line bundles on a quadric $\P^1 \times \P^1$
for any $k$.
However unlike the quadric case 
there are only finitely many isomorphism classes of acyclic line bundles on $S$.

We list these line bundles in Section 3.2 (Lemma \ref{acyclicSets}) and use them to
construct six exceptional collections on $S$ 
of length $4$.
We prove that this list exhausts all the exceptional collections consisting
of $4$ line bundles up to a common twist by a line bundle (Theorem \ref{collections}). 
We compute dimensions of $Ext$-groups between elements of the collections in Proposition \ref{algebras}.
All of our exceptional collections in question are non-strict. Moreover in all of them
both $Ext^1$ and $Ext^2$ are present unlike the case of the Burniat surfaces
where only $Ext^2$ appears (\cite{AO}, Lemma 4.8). 
We also note that unlike the case of Burniat surfaces the exceptional collections
we present have no blocks, that is no groups of pairwise orthogonal elements.

Confirming the analogy between the Beauville surface and the quadric, it turns out
furthermore that line bundles in the exceptional collections on $S$ are all products of
powers of square roots $\KK(1,0)$, $\KK(0,1)$ of canonical classes coming from the factors
$C$ and $C'$.

We expect the existence of exceptional collections of line bundles
to hold for other product-quotient surfaces with $p_g=q=0, K^2 = 8$ (see e.g. \cite{BaP}) as well.
However we do not see at the moment whether there could be a uniform proof for that (see Remark \ref{canLattice}).
We plan to return to this question in the future.

\medskip

We thank Alexander Kuznetsov for reading a draft of the paper and providing us with
many useful comments and remarks.
We thank Ingrid Bauer, Arend Bayer, Ludmil Katzarkov, Mateusz Michalek, Dmitry Orlov, Yuri Prokhorov, Nicolo Sibilla,
Maxim Smirnov for helpful discussions.
We thank the referee for pointing out some typos and inaccuracies in the previous
version of the paper.

\medskip

The first author is partially supported by 
NSF Grant DMS0600800, 
NSF FRG Grant DMS-0652633, 
FWF Grant P20778, 
and an ERC Grant (GEMIS).
The second author is supported by the Max-Planck-Institut f\"ur Mathematik
and the SFB / Transregio 45 ``Periods, moduli spaces and arithmetic of algebraic varieties''
Bonn - Mainz - Essen.

\section{The Beauville surface and its properties}

\subsection{Generalities on $G$-equivariant line bundles}

We list general facts on $G$-linearized line bundles
and their cohomology (see \cite{Mum} for details).

Let $G$ be a finite group acting on a smooth projective variety
$X/\C$.
The equivariant Picard group $Pic^G(X)$ is the group of isomorphism classes of 
$G$-linearized line bundles on $X$.
The equivariant Picard group is related to the ordinary Picard
group via an exact sequence
\be{picG}
0 \to \widehat{G} \to Pic^G(X) \to Pic(X)^G,
\ee
where $\widehat{G} = Hom(G, \C^*)$ is the group of characters 
and the first arrow associates to a character $\chi: G \to \C^*$
a trivial line bundle with the $G$-action induced by $\chi$.

\medskip

Suppose $G$ is abelian; then we can describe the equivariant Picard group in terms
of $G$-invariant divisors on $X$. 

\begin{lemma}
Let $G$ be a finite abelian group. Then the image of $Pic^G(X)$ in $Pic(X)^G$ consists of
equivalence classes of $G$-invariant divisors and (\ref{picG}) rewrites as
\be{picGab}
0 \to \widehat{G} \to Pic^G(X) \to \frac{Div(X)^G}{rational \, equivalence} \to 0.
\ee
\end{lemma}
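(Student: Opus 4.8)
The plan is to prove the two halves of the statement separately: (i) the image of $Pic^G(X)$ in $Pic(X)^G$ consists exactly of classes represented by $G$-invariant divisors, and (ii) every $G$-invariant divisor produces an element of $Pic^G(X)$, which upgrades (\ref{picG}) to the short exact sequence (\ref{picGab}). Observe first that since $X$ is smooth, Weil and Cartier divisors coincide, and two $G$-invariant divisors are rationally equivalent precisely when the associated line bundles are isomorphic; hence $Div(X)^G/(\text{rational equivalence})$ embeds into $Pic(X)$, so it does make sense to speak of its image inside $Pic(X)^G$.

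For (i), let $(L,\phi)$ be a $G$-linearized line bundle. Choose an ample $G$-linearized line bundle $M$ on $X$ — for instance $M = \bigotimes_{g \in G} g^* A$ for any ample $A$, which carries a tautological $G$-linearization (the factors being permuted) and is ample as a tensor product of ample bundles. For $k \gg 0$ the $G$-linearized bundle $L \otimes M^{\otimes k}$ is very ample, so $V := H^0(X, L \otimes M^{\otimes k})$ is a nonzero finite-dimensional representation of $G$. Since $G$ is finite abelian, $V$ splits into character eigenspaces, so there is $0 \neq s \in V$ with $g \cdot s = \chi(g)\,s$ for all $g \in G$ and some $\chi \in \widehat{G}$. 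The zero divisor of $g \cdot s$ equals the $g$-translate of $\mathrm{div}(s)$ (the linearization isomorphism is an isomorphism of line bundles and does not affect vanishing, while pullback along $g$ translates the zero locus), and it also equals $\mathrm{div}(s)$ because $g \cdot s = \chi(g)\,s$; hence the effective divisor $E_1 := \mathrm{div}(s)$ is $G$-invariant and $\OO_X(E_1) \cong L \otimes M^{\otimes k}$. Applying the same reasoning to the very ample $G$-linearized bundle $M^{\otimes k}$ yields a $G$-invariant effective divisor $E_2$ with $\OO_X(E_2) \cong M^{\otimes k}$. Then $E_1 - E_2 \in Div(X)^G$ and $\OO_X(E_1 - E_2) \cong L$, proving (i).

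For (ii), given a $G$-invariant divisor $D$, realize $\OO_X(D)$ as the subsheaf of the constant sheaf $\underline{\C(X)}$ of rational functions $f$ with $\mathrm{div}(f) + D \geq 0$. The constant sheaf $\underline{\C(X)}$ has an evident $G$-linearization coming from the $G$-action on the field $\C(X)$, satisfying the cocycle condition tautologically; since $D$ is $G$-invariant, this linearization preserves the subsheaf $\OO_X(D)$, which therefore inherits a $G$-linearization. This gives an element of $Pic^G(X)$ mapping to the class of $D$, so $Pic^G(X) \to Div(X)^G/(\text{rational equivalence})$ is surjective. Injectivity of $\widehat{G} \to Pic^G(X)$ and exactness in the middle term are inherited from (\ref{picG}) once we use that $Div(X)^G/(\text{rational equivalence})$ injects into $Pic(X)$; this establishes (\ref{picGab}).

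The main obstacle is step (i): one must pass from a mere isomorphism $g^* L \cong L$ for all $g$ to an honestly $G$-stable \emph{divisor}, not just an invariant divisor class. The device that makes this work is that a representation of a finite abelian group always admits an eigenvector, i.e. the ample twist $L \otimes M^{\otimes k}$ has a \emph{semi-invariant} global section, whose zero locus is then genuinely $G$-invariant. (Alternatively, when $G$ acts faithfully on $X$ one could kill $H^1(G, \C(X)^*)$ by Hilbert's Theorem 90 and produce a $G$-invariant rational section of $L$ directly; the eigenvector argument above has the advantage of needing no faithfulness hypothesis.)
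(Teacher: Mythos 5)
Your proof is correct and rests on the same key observation as the paper's: since $G$ is abelian, a finite-dimensional $G$-stable space of sections contains an eigensection, whose divisor is then genuinely $G$-invariant. The paper applies this directly to a finite-dimensional invariant subspace of \emph{meromorphic} sections of $L$ (which avoids the ample twist and the subtraction $E_1-E_2$) and leaves the surjectivity onto $Div(X)^G/(\text{rational equivalence})$ implicit, whereas you spell both out; these are differences of packaging, not of method.
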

\begin{proof}
We need to prove that for a $G$-linearized line bundle $L$ there exists a section $s$
with a $G$-invariant divisor $div(s)$. 
Let $W$ be an arbitrary finite-dimensional invariant subspace of meromorphic
sections of $L$.

Since $G$ is abelian, we may assume $W$ is one-dimensional, $W = \C \cdot s$.
Now $s$ is a $G$-eigensection, which is equivalent to $div(s)$ being $G$-invariant.
\end{proof}

\medskip

If $G$ is a finite group (not necessarily abelian) acting freely on $X$, then we have
an etale covering of smooth projective varieties
\[
\pi: X \to X/G.
\]
In this case specifying
a line bundle $L$ on $X/G$ is the same as
specifying a  line bundle $\widetilde{L}=\pi^* L$ on $X$ together
with additional structure of $G$-linearization. This way we get an identification
\[
Pic^G(X) = Pic(X/G).
\]

For any line bundle $L$ on $X/G$
the groups $H^i(X,\pi^* L)$ have a natural structure of $G$-representations
and we have canonical isomorphisms
\[
H^i(X/G,L) = H^i(X,\pi^* L)^G.
\]

\medskip

For our computations 
we need an equivariant version of the Serre duality.
For any $G$-linearized line bundle on $X$ we have an isomorphism of $G$-representations:
\be{Serre}
H^k( X, L ) \cong (H^{dim(X)-k}( X, L^* \otimes \omega_X ))^*.
\ee

\begin{lemma}\label{canClass}
Let $V$ be an $n+1$-dimensional representation of a finite group $G$.
Then we have an isomorphism of $G$-linearized line bundles on $\P(V)$:
\[
\omega_{\P(V)} \cong \OO(-n-1)( \det V^* ).
\]
\end{lemma}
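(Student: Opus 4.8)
The plan is to compute the canonical bundle of $\P(V)$ equivariantly by examining the Euler sequence, which is the standard tool for finding $\omega_{\P(V)}$ and which carries a natural $G$-linearization when $V$ is a $G$-representation. Concretely, on $\P(V)$ there is the tautological exact sequence
\be{euler}
0 \to \OO(-1) \to V \otimes \OO \to Q \to 0,
\ee
where $V \otimes \OO$ denotes the trivial bundle with fiber $V$, equipped with the $G$-action coming from the representation $V$, and $\OO(-1)$ is the tautological subbundle, which is $G$-invariant inside $V \otimes \OO$ and hence inherits a $G$-linearization; the quotient $Q$ then also inherits one. Taking top exterior powers (determinants) of \eqref{euler} as $G$-linearized line bundles gives $\det(V) \otimes \OO \cong \OO(-1) \otimes \det(Q)$, so $\det Q \cong \OO(1)(\det V)$, where I write $(\det V)$ for the twist by the character $\det V$ coming from the $G$-action on the constant bundle.

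Next I would identify $\det Q$ with the anticanonical bundle. The Euler sequence identifies $Q$ with the twisted tangent bundle: there is a $G$-equivariant isomorphism $T_{\P(V)} \cong Q \otimes \OO(1)$, or equivalently $T_{\P(V)}(-1) \cong Q$. Hence $\omega_{\P(V)}^{-1} = \det T_{\P(V)} \cong \det Q \otimes \OO(n) \cong \OO(1)(\det V) \otimes \OO(n) = \OO(n+1)(\det V)$, where $n = \dim \P(V)$ so that $\mathrm{rk}\, T_{\P(V)} = \mathrm{rk}\, Q = n$. Dualizing, and using that dualizing a line bundle twisted by a character $\chi$ gives the dual line bundle twisted by $\chi^{-1} = \chi^*$, yields $\omega_{\P(V)} \cong \OO(-n-1)(\det V^*)$, which is the claimed formula.

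The only genuinely delicate point is to make sure that every isomorphism above is $G$-equivariant, i.e. respects the linearizations, rather than merely an isomorphism of underlying line bundles; with $b_2$-type subtleties absent here the ambiguity is precisely a character of $G$, so one must track the characters carefully. For this I would emphasize that the tautological inclusion $\OO(-1) \hookrightarrow V \otimes \OO$ is visibly $G$-equivariant by construction (a point of $\P(V)$ is a line $\ell \subset V$, and $G$ sends $\ell$ to $g\ell$ compatibly), so the induced linearizations on $\OO(-1)$ and on $Q$ are canonical, and all subsequent maps (determinants, the Euler isomorphism $T_{\P(V)} \cong \mathcal{H}om(\OO(-1), Q)$) are functorial and therefore automatically $G$-equivariant. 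I would state the computation of characters of the constant bundle $V \otimes \OO$ explicitly: its determinant as a $G$-linearized line bundle is the trivial bundle twisted by the character $\det V \colon G \to \C^*$, $g \mapsto \det(g|_V)$. No further obstacle arises, so the proof is short once the equivariance bookkeeping is made explicit.
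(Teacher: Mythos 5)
Your proof is correct and is essentially the paper's argument: the paper likewise takes the determinant of the $G$-linearized Euler sequence, just written directly in the cotangent form $0 \to \Omega^1_{\P(V)} \to \OO(-1) \otimes V^* \to \OO \to 0$, which gives $\omega_{\P(V)} \cong \OO(-n-1)(\det V^*)$ in one step. Your route through the tautological subbundle, the quotient $Q$, and the identification $T_{\P(V)} \cong Q \otimes \OO(1)$ is just the dual formulation of the same computation, with the equivariance bookkeeping made explicit.
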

\begin{proof}
The claim follows by taking the determinant of the Euler exact sequence
of $G$-linearized line bundles on $\P(V)$
\[
0 \to \Omega^1_{\P(V)} \to \OO(-1) \otimes V^* \to \OO \to 0.
\]
\end{proof}

In the notation of Lemma \ref{canClass} 
let $F$ be an invariant section of $\OO(d)$ on $\P(V)$ and $X$ be the hypersurface $F = 0$.
Then there is a standard adjunction formula giving an isomorphism of $G$-linearized line bundles on $X$:
\be{canClassCor}
\omega_{X} \cong \OO(d-n-1)( \det V^* ).
\ee

\subsection{Equivariant Fermat quintics}

In what follows $G$ is an abelian group 
\[
G = (\Z/5)^2 = \Z/5 \cdot e_1 \oplus \Z/5 \cdot e_2
\]
acting on a three dimensional vector space $V$ with
induced action on $\P^2 = \P(V)$ given by
\[\bal
e_1 \cdot (X:Y:Z) & \= (\zeta_5 X: Y:Z)  \\
e_2 \cdot (X:Y:Z) & \= (X: \zeta_5 Y:Z), 
\eal\]
where $\zeta_5$ is the $5$-th root of unity.
Let $C$ be the plane $G$-invariant Fermat quintic curve
\[
X^5 + Y^5 + Z^5 = 0.
\]

\medskip

We consider the scheme-theoretic quotient $C/G$ which is isomorphic
to $\P^1$ and the quotient map
\[
\pi: C \to \P^1
\]
of degree $25$. Explicitly we may pick coordinates on $\P^1$ such that $\pi$ is
given by the formula
\[
\pi(X:Y:Z) = (X^5:Y^5). 
\]

One easily checks that there are three ramification points on $\P^1$ corresponding
to the orbits where $G$ acts non-freely:

\be{Di}\bal
D_1 & \= \{ (0: -\zeta_5^j : 1), \, j = 0 \dots 4\} \\ 
D_2 & \= \{ (-\zeta_5^j : 0: 1), \, j = 0 \dots 4\} \\ 
D_3 & \= \{ (\zeta_5^j: -\zeta_5^j : 0), \, j = 0 \dots 4\} \\ 
\eal\ee

Stabilizers of the points in $D_i, \; i=1,2,3$ are equal to 
\be{StabC}\bal
G_1 &\= \Z/5 \cdot e_1  \\
G_2 &\= \Z/5 \cdot e_2 \\
G_3 &\=\Z/5 \cdot (e_1 + e_2)
\eal\ee
respectively.

\begin{lemma}\label{picGC}
The equivariant Picard group $Pic^G(C)$ splits as a direct sum
\[
Pic^G(C) = \widehat{G} \oplus \Z \cdot \OO(1).
\]
\end{lemma}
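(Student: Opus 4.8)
The plan is to use the exact sequence (\ref{picG}) for the action of $G$ on $C$, which here reads $0 \to \widehat{G} \to Pic^G(C) \to Pic(C)^G$, and to show two things: that the right-hand arrow is surjective onto all of $Pic(C)^G$, and that the resulting short exact sequence is split by the degree map. For the first point, I would combine the previous lemma, which identifies the image of $Pic^G(C)$ in $Pic(C)^G$ with $G$-invariant divisor classes modulo rational equivalence, with the fact that $C$ is a curve: since $C$ has genus $6 > 0$ the Picard group is $Pic(C) = \Z \oplus Jac(C)$, so $Pic(C)^G$ could a priori be larger than $\Z$, and I must check that every $G$-invariant line bundle admits a $G$-invariant representative divisor. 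This is exactly the content of the abelian-group lemma proved just above, so the image is $Div(C)^G/\!\sim$; it then remains to argue this quotient equals $\Z$, i.e. that two $G$-invariant divisors of the same degree are rationally equivalent \emph{and} that such rational equivalences need not be upgraded — but for the statement as phrased (a direct sum decomposition of $Pic^G(C)$) what I really need is a splitting, not a computation of $Pic(C)^G$ itself.

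The cleanest route: define a homomorphism $\deg \colon Pic^G(C) \to \Z$ by forgetting the linearization and taking the degree of the underlying line bundle. Since $\OO(1)$ on $\P^2$ restricts to a degree-$5$ bundle on the quintic $C$, the hyperplane class has degree $5$, but here $\OO(1)$ presumably denotes $\OO_{\P^2}(1)|_C$; I would need to pin down the normalization so that the $\Z$ summand is generated by a genuine element of $Pic^G(C)$ of minimal positive degree in the image. The key structural input is that the ramification data (\ref{Di}), (\ref{StabC}) let me write down explicit $G$-invariant divisors: the preimages $\pi^{-1}(\text{pt})$ of points on $\P^1$, and in particular the three special fibers supported on $D_1, D_2, D_3$. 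These give $G$-invariant divisors of degree $25$ (a generic fiber) and of degree $5$ (each $D_i$, as a reduced sum over one orbit) — wait, each $D_i$ is a single $G$-orbit of size $5$, hence degree $5$, and $5 H = 5\OO(1)|_C$ is linearly equivalent to such a fiber-type combination. So the image of $\deg$ contains $5\Z$; I then need to show it is exactly $5\Z$, i.e. there is no $G$-invariant divisor of degree coprime to $5$. That follows because any $G$-orbit on $C$ has size $1$, $5$, or $25$, and there are \emph{no} fixed points of the whole group $G$ on $C$ (the three loci $D_i$ have stabilizer only a $\Z/5$, not all of $G$) — so every $G$-invariant divisor has degree divisible by $5$, giving $\mathrm{im}(\deg) = 5\Z$ generated by the class of $\OO(1)$.

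Putting this together: the composite $\widehat{G} \hookrightarrow Pic^G(C) \xrightarrow{\deg} \Z$ is zero (characters act on the trivial bundle, degree $0$), so $\widehat{G}$ lies in $\ker(\deg)$; conversely I claim $\ker(\deg) = \widehat{G}$, which amounts to saying a $G$-linearized line bundle of degree $0$ whose underlying bundle is $G$-invariant must have trivial underlying bundle — here I use that $C/G \cong \P^1$ has $Pic = \Z$, so the degree-$0$ part of $Pic(C/G) = Pic^G(C)$ is trivial, forcing the underlying bundle and then (by (\ref{picG}), whose kernel is $\widehat{G}$) reducing the linearization to a character. Hence $0 \to \widehat{G} \to Pic^G(C) \xrightarrow{\deg} 5\Z \to 0$ is exact, and choosing $\OO(1)$ as a degree-$5$ preimage splits it, giving $Pic^G(C) = \widehat{G} \oplus \Z\cdot\OO(1)$. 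The main obstacle I anticipate is the surjectivity/identification of the image in $Pic(C)^G$ — i.e. controlling $Pic(C)^G$ on this positive-genus curve — but this is sidestepped by working with $Pic^G(C) = Pic(C/G) = Pic(\P^1) = \Z$ directly via the free-quotient identification, provided one is careful that $G$ does \emph{not} act freely on $C$ itself, so this identification applies only after recognizing that the relevant group is still computed by the degree; the honest work is checking $\mathrm{im}(\deg)=5\Z$ using the orbit-size analysis from (\ref{Di})–(\ref{StabC}).
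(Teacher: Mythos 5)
Your overall strategy is reasonable, and your orbit-size analysis (every $G$-orbit on $C$ has $5$ or $25$ points since $G$ has no fixed points, so every $G$-invariant divisor has degree divisible by $5$) is genuinely part of what is needed. But the step where you identify $\ker(\deg)$ with $\widehat{G}$ contains a real error: you invoke $Pic^G(C) = Pic(C/G) = Pic(\P^1) = \Z$, and that identification is valid only for a \emph{free} action. Here $G$ does not act freely on $C$ --- the points of the divisors in (\ref{Di}) have stabilizers of order $5$ by (\ref{StabC}) --- and indeed the conclusion of the lemma itself, $Pic^G(C) = \widehat{G} \oplus \Z$ with $\widehat{G} \cong (\Z/5)^2$ torsion, is incompatible with $Pic^G(C) \cong Pic(\P^1) = \Z$. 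Your parenthetical remark that ``the relevant group is still computed by the degree'' does not repair this; it is precisely the assertion to be proved. Concretely, what your degree computation cannot rule out is a $G$-invariant divisor of degree $0$ that is not principal, e.g.\ a class such as $D_1 - D_2$ a priori giving a nonzero $G$-invariant element of the degree-zero part of $Pic(C)$, which is a positive-dimensional abelian variety for this genus-$6$ curve. Knowing that the image of $\deg$ is $5\Z$ says nothing about whether its kernel is exactly $\widehat{G}$.

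The paper closes exactly this gap by computing classes rather than degrees, via the route you sketched in your first paragraph and then abandoned. Using the abelian-case sequence (\ref{picGab}), the third term is the image of $Div(C)^G$ in $Pic(C)$, generated by the classes of $G$-orbits; each orbit is either one of $D_1, D_2, D_3$ --- which are \emph{hyperplane sections} of $C \subset \P^2$, hence all equal to $\OO(1)$ in $Pic(C)$ --- or a generic fiber of $\pi$, which is cut out by a quintic and hence equals $\OO(5)$. Thus the image is exactly $\Z \cdot \OO(1)$, which is free of rank one, so the sequence splits and the lemma follows. If you want to keep your degree-map formulation you must supplement the orbit-size count with this linear-equivalence information ($D_1 \sim D_2 \sim D_3 \sim$ hyperplane section, generic fiber $\sim$ five times a hyperplane section); the count alone is not enough.
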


\begin{proof}
The claim follows from the exact sequence (\ref{picGab}).
Indeed any $G$-invariant divisor is a combination of $G$-orbits on $C$. 
Any orbit is either a smooth fiber of $\pi$ consisting of $25$ points or
one of the divisors (\ref{Di}) consisting of $5$ points.
Since $D_1$, $D_2$, $D_3$ are hyperplane sections of $C$
they give rise to the same element $\OO(1)$ in the Picard group $Pic(C)$.
All the generic fibers are of $\pi$ are linearly equivalent to each other,
and also equivalent to $\OO(5)$.

Therefore the third term
in the exact sequence (\ref{picGab}) is $\Z \cdot \OO(1)$ and (\ref{picGab}) splits
giving the required decomposition.
\end{proof}

We introduce some notation which will help us
to keep track of characters appearing in the cohomology representations.
Note that the Grothendieck ring of the category of
$\Z_+$-graded representations
of $G$ is isomorphic to $\Z[q,x,y] / (x^5-1,y^5-1)$.
Thus to any $\Z_+$-graded $G$-representation $W$ we can attach a polynomial
\be{brk}
[W] \in K_0(Rep_{\Z_+}(\Z/5)^2) = \Z[q,x,y] / (x^5-1,y^5-1).
\ee

By definition we have the following properties of the polynomial $[W]$:
\[\bal\;
[W \oplus W'] & \=  [W] + [W'] \\
[W \otimes W'] & \=  [W] \cdot [W'] \\
[W^*] & \=  [W] \Big|_{x=x^4, y=y^4}. \\
\eal\]

\medskip

Later we will use the same bracket notation $[i,j]$, $i,j \in \Z/5$ for the character $e_1 \mapsto \zeta_5^i, e_2 \mapsto \zeta_5^j$
which will hopefully not lead to a confusion. For example we have
\[
[W[i,j]] = [W] \cdot x^i y^j. 
\]

\medskip

We now proceed to computing cohomology groups of line bundles $\OO(n), n \le 5$ on $C$ taking into account
the $G$-action. 
For $n \le 4$ we have
\[
H^0(C, \OO(n)) \cong H^0(\P^2, \OO(n)) = \bigoplus_{i,j \ge 0, \, i + j \le n} \C \cdot X^i Y^j Z^{n-i-j}.
\]
For $n = 5$ we quotient out the representation space $H^0(\P^2,\OO(5))$ by the relation $X^5 + Y^5 + Z^5 = 0$.
Thus we have
\be{cohC0}\bal\;
[H^0(C,\OO(n))] &\= \sum_{i,j \ge 0, \, i + j \le n} x^i \, y^j, \; 0 \le n \le 4 \\
[H^0(C,\OO(5))] &\= \sum_{i,j \ge 0, \, i + j \le 5} x^i \, y^j - 1. 
\eal\ee

In order to compute $H^1(C, \OO(n))$ we first use the adjunction (\ref{canClassCor}):
\[\bal
V^* &\= \Gamma(\P(V),\OO(1)) = \C \cdot X \oplus \C \cdot Y \oplus \C \cdot Z \cong [1,0] \oplus [0,1] \oplus [0,0]  \\
\det(V^*) &\= [1,0] \otimes [0,1] \otimes [0,0] = [1,1] \\
\omega_C &= \OO(2)[1,1], \\
\eal\]
so that by Serre duality (\ref{Serre}) we have
\[
H^1(C, \OO(n)) \cong H^0(C, \OO(2-n)[1,1])^* = H^0(C, \OO(2-n))^*[4,4],    
\]
which in terms of polynomials implies that
\[
[H^1(C,\OO(n))](x,y) = [H^0(C,\OO(2-n))](x^4,y^4) \cdot x^4 \, y^4.
\]
A short computation shows that
\be{cohC1}\bal\;
[H^1(C,\OO)] & \= q(x^4 y^4 + x^4 y^3 + x^3 y^4 + x^4 y^2 + x^3 y^3 + x^2 y^4) \\
[H^1(C,\OO(1))] & \= q(x^4 y^4 + x^4 y^3 + x^3 y^4) \\
[H^1(C,\OO(2))] & \= q x^4 y^4 \\
[H^1(C,\OO(n))] & \= 0, \; n \ge 3. \\
\eal\ee

We introduce the curve $C'$ which is defined by the same equation
\[
X^5 + Y^5 + Z^5 = 0 
\]
as $C$ but has a different $G$-action. We pick the $G$-action on $C'$ to be defined as 
\[\bal
e_1 \cdot (X:Y:Z) &\= (\zeta_5^2 X: \zeta_5^4 Y : Z) \\
e_2 \cdot (X:Y:Z) &\= (\zeta_5 X: \zeta_5^3 Y : Z) 
\eal\]
For this action points in divisors $D_i, \; i=1,2,3$ defined as in (\ref{Di})
have stabilizers
\be{StabCp}\bal
G_1' &\= \Z/5 \cdot (e_1 + 2 e_2) \\
G_2' &\= \Z/5 \cdot (e_1 + 3 e_2) \\
G_3' &\=\Z/5 \cdot (e_1 + 4 e_2)
\eal\ee
respectively.


It follows from the construction that for any $n \in \Z$ we have a formula
\be{cohCp}
[H^*(C',\OO(n))](q,x,y) = [H^*(C,\OO(n))](q,x^2 y,x^4 y^3)
\ee
and that the canonical class on $C'$ is equal to $\OO(2)[1,4]$.

We introduce the notation
\[\bal
\KK_C(1) &= \OO_C(1)[3,3] \\
\KK_{C'}(1) &= \OO_{C'}(1)[3,2] \\
\eal\]
for the unique square roots of the canonical classes on $C$ and $C'$ respectively.

\subsection{Line bundles and cohomological invariants of the Beauville surface}

We let $T = C \times C'$ with the diagonal $G$-action. Since the stabilizers in
(\ref{StabC}) and (\ref{StabCp}) are distinct, the $G$-action on $T$ is free.
One can check that the corresponding smooth quotient Beauville surface $S = T/G$ is of general type with $p_g = q = 0, K^2 = 8$
(Chapter X, Exercise 4 in \cite{Bea}).
The Noether formula gives $b_2 = 2$.
Since $p_g = q = 0$, the exponential exact sequence gives an identification
\[
Pic(S) = H^2( S, \Z ).
\]
Modulo torsion $Pic(S)$ is an indefinite unimodular lattice of rank $2$, that is a hyperbolic plane.

We introduce $G$-linearized line bundles $\OO(i,j)$ and $\KK(i,j)$ for $i,j \in \Z$
as follows:
\[\bal
\OO(i,j) &= p_1^*(\OO(i)) \otimes p_2^*(\OO(j)) \\
\KK(i,j) &= p_1^*(\KK(i)) \otimes p_2^*(\KK(j)) = \OO(i,j)[3i+3j,3i+2j].\\
\eal\]

We will often prefer to work with the lattice $\KK(i,j)$ since the exceptional
collections we write down in Section 3 are all contained in this lattice.

We note however that $\KK(i,j)$ and $\OO(i,j)$ differ by a torsion line bundle hence are 
equivalent from the point of view of intersection pairing.
In particular in the following Proposition $\OO(i,j)$ can be replaced by $\KK(i,j)$
(with an obvious exception of the second claim).

\begin{proposition}\label{lineBundles}
1. The Picard group of $S$ splits as
\[
Pic(S)( = Pic^G(T)) = \widehat{G} \cdot [\OO] \oplus \Z \cdot [\OO(1,0)] \oplus \Z \cdot [\OO(0,1)].
\]

2. The canonical class $\omega_S$ is equal to $\KK(2,2) = \OO(2,2)[2,0]$.

3. The intersection pairing is given by
\[
(\OO(i_1,j_1)(\chi_1) \cdot \OO(i_2,j_2)(\chi_2)) = i_1 j_2 + j_1 i_2.
\]

4. The Euler characteristic of a line bundle $L = \OO(i,j)(\chi)$ is equal to $(i-1)(j-1)$.

\end{proposition}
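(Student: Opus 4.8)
The four assertions are almost independent, and the plan is to prove them in the order Part 2, Part 3, Part 1, Part 4, since Part 1 uses Part 3 and Part 4 uses Parts 2 and 3. For Part 2 the point is that $\pi\colon T\to S$ is étale, so $\pi^*\omega_S=\omega_T$ as $G$-linearized bundles, and under the identification $Pic(S)=Pic^G(T)$ it suffices to compute $\omega_T$. Since $T=C\times C'$ is a product, $\omega_T=p_1^*\omega_C\otimes p_2^*\omega_{C'}$, and the equalities $\omega_C=\OO(2)[1,1]$, $\omega_{C'}=\OO(2)[1,4]$ were established in Section 2.2; multiplying and using $5\equiv0$ gives $\OO(2,2)[2,5]=\OO(2,2)[2,0]$, which by the definition of $\KK(i,j)$ is $\KK(2,2)$.

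For Part 3, the intersection form on $S$ factors through $Pic(S)$ modulo torsion, so the characters $\chi_i$ play no role, and I would pull back along the degree-$25$ étale cover $\pi$, using $(\,\cdot\,)_S=\tfrac1{25}(\pi^*(\,\cdot\,))_T$. On $T$ one has $\pi^*\OO(i,j)=p_1^*\OO_C(i)\otimes p_2^*\OO_{C'}(j)$, and since $\OO_C(1)$ and $\OO_{C'}(1)$ have degree $5$ on the quintic curves this class is numerically $5i\,f_1+5j\,f_2$, where $f_1=p_1^*(\mathrm{pt})$ and $f_2=p_2^*(\mathrm{pt})$ satisfy $f_1^2=f_2^2=0$ and $f_1\cdot f_2=1$. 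Hence the product on $T$ equals $25(i_1j_2+i_2j_1)$, and dividing by $25$ gives the formula.

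For Part 1, since $G$ acts freely on $T$ we have $Pic(S)=Pic^G(T)$, and the abelian case of the basic sequence recorded in (\ref{picGab}) reads $0\to\widehat G\to Pic^G(T)\to Div(T)^G/\!\sim\,\to 0$. Because $q(S)=0$, $H^1(\OO_T)^G=H^1(\OO_S)=0$, hence $Pic^0(T)^G=0$; therefore $Div(T)^G/\!\sim$, being contained in $Pic(T)^G$, embeds into the finitely generated torsion-free group $NS(T)$ (torsion-free since $H^2(T,\Z)$ is), so it is free and the sequence splits: $Pic(S)=\widehat G\oplus\Lambda$ with $\Lambda=Div(T)^G/\!\sim$, whence the torsion subgroup of $Pic(S)$ is exactly $\widehat G$ and $\Lambda$ is $Pic(S)$ modulo torsion. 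Now $\OO(1,0)$ and $\OO(0,1)$ lie in $\Lambda$, and by Part 3 their Gram matrix is $\left(\begin{smallmatrix}0&1\\1&0\end{smallmatrix}\right)$, of determinant $-1$; since $\Lambda$ is a unimodular lattice of rank $2$ (the hyperbolic plane recalled above), a full-rank unimodular sublattice must be everything, so $\{\OO(1,0),\OO(0,1)\}$ is a basis of $\Lambda$.

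Finally, Part 4 follows from Riemann--Roch on the surface: $\chi(L)=\chi(\OO_S)+\tfrac12 L\cdot(L-K_S)$, with $\chi(\OO_S)=1$ as $p_g=q=0$, and numerically $K_S=\OO(2,2)$ by Part 2, so Part 3 gives $L^2=2ij$ and $L\cdot K_S=2i+2j$ for $L=\OO(i,j)(\chi)$ (the twist $(\chi)$ being numerically trivial), hence $\chi(L)=1+ij-i-j=(i-1)(j-1)$. I expect the genuinely delicate step to be in Part 1: the splitting and the identification of the torsion with $\widehat G$ are formal once $Pic^0(T)^G=0$, but the fact that the free part is spanned \emph{exactly} by $\OO(1,0),\OO(0,1)$, rather than by some finite-index overlattice, rests on the numerical input that $Pic(S)$ modulo torsion is unimodular of rank $2$ (equivalently $b_2(S)=2$, equivalently the absence of $G$-equivariant homomorphisms between the Jacobians of $C$ and $C'$) together with the explicit pairing of Part 3.
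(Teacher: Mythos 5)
Your Parts 2, 3 and 4 are correct and follow the paper's proof essentially verbatim (adjunction on the factors for the canonical class, pulling the intersection back along the degree-$25$ \'etale cover, and Riemann--Roch). The unimodularity argument at the end of your Part 1 --- the Gram matrix of $\OO(1,0),\OO(0,1)$ has determinant $-1$ inside the rank-$2$ unimodular lattice $Pic(S)/tors$, so they generate it --- is also exactly the paper's argument.

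The gap is in your identification of the torsion, specifically in the step ``$H^1(\OO_T)^G=H^1(\OO_S)=0$, hence $Pic^0(T)^G=0$.'' Vanishing of $H^1(\OO_T)^G$ only says that the fixed subgroup $Pic^0(T)^G$ has trivial tangent space, i.e.\ is a \emph{finite} group; it does not force it to be trivial. (Compare an elliptic curve $E$ with the involution $x\mapsto -x$: here $H^1(\OO_E)^{\Z/2}=0$ while the fixed subgroup is $E[2]\cong(\Z/2)^2$.) In the present situation $Pic^0(T)^G=J(C)^G\times J(C')^G\cong H^1(G,H^1(C,\Z))\times H^1(G,H^1(C',\Z))$, a finite group with no a priori reason to vanish. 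Consequently you have not shown that $Div(T)^G/\!\sim$ is torsion-free, so the sequence $0\to\widehat G\to Pic^G(T)\to Div(T)^G/\!\sim\to 0$ need not split as you claim, and $Pic(S)_{tors}$ could a priori be strictly larger than $\widehat G$. This is precisely the point the paper does not try to derive formally: it quotes the computation $H_1(S,\Z)=(\Z/5)^2$ from Bauer--Catanese (Theorem 4.3(4)), which gives $Pic(S)_{tors}=H^2(S,\Z)_{tors}=(\Z/5)^2=\widehat G$ directly, and then splits off the free part by the hyperbolic-plane argument. To repair your version you would either need to prove that no nonzero $G$-invariant divisor class on $T$ is torsion (which amounts to the same group-cohomology computation), or simply cite the known value of $H_1(S,\Z)$ as the paper does.
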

\begin{proof}
Let us first prove that
\be{inters1}
(\OO(1,0) \cdot \OO(0,1)) = 1.
\ee
For that we pull-back the intersection to $T$:
\[\bal
25 \cdot (\OO(1,0) \cdot \OO(0,1))_S &\= (\pi^* \OO(1,0) \cdot \pi^* \OO(0,1))_T \\ 
&\= (5 [pt \times C'] \cdot 5 [C \times pt])_T = 25,
\eal\]
which implies (\ref{inters1}). Since we also obviously have
\be{inters2}
(\OO(1,0)^2) = (\OO(0,1)^2) = 0, 
\ee
it follows that $\OO(1,0)$ and $\OO(0,1)$ span a hyperbolic plane
and therefore generate the whole Picard group modulo torsion. 

To prove the first claim we use the fact that $H_1(S) = (\Z/5)^2$ \cite{BaC}, Theorem 4.3, (4),
which implies that
\[
Pic(S)_{tors} = H^2(S,\Z)_{tors} = H_1(S,\Z)_{tors} = (\Z/5)^2.
\]
Since by (\ref{picGab}) $\widehat{G} \cong (\Z/5)^2$ is contained in $Pic(S)$, $Pic(S)_{tors} \cong \widehat{G}$ 
and we get a decomposition
\[
Pic(S) = \widehat{G} \cdot [\OO] \oplus Pic(S)/tors = \widehat{G} \cdot [\OO] \oplus \Z \cdot [\OO(1,0)] \oplus \Z \cdot [\OO(0,1)].
\]

The second claim follows from
\[
\omega_S = p_1^* \omega_C \otimes p_2^* \omega_{C'} = \KK(2,0) \otimes \KK(0,2) = \OO(2,0)[1,1] \otimes \OO(0,2)[1,4].
\]

The third claim of the Lemma follows from (\ref{inters1}),(\ref{inters2}), and the fact that twisting by torsion
classes does not affect the intersection form.

To check the fourth claim we use Riemann-Roch formula:
\[\bal
\chi(L) &\= 1 + \frac{(L \cdot L \otimes \omega_S^*)}{2} \\
& \=  1 + \frac{(\OO(i,j)(\chi) \cdot \OO(i-2,j-2)(\chi- [2,0]))}{2} \\
& \=  1 + \frac{(i(j-2)+j(i-2)}{2} \\
& \= (i-1)(j-1).
\eal\]

\end{proof}

We have a K\"unneth-type formula for isomorphism classes of graded representations
(recall the notation from (\ref{brk})): 
\be{Kunneth}
[H^*(T,\KK(i,j))](q,x,y) = [H^*(C,\KK(i))](q,x,y) \cdot [H^*(C',\KK(j))](q,x,y),
\ee
and the analogous formula with $\KK(i,j)$ replaced by $\OO(i,j)$.
This is simply a reformulation of the K\"unneth formula
\[
H^*(C \times C', p_1^* L_1 \otimes p_2^* L_2) = H^*(C, L_1) \otimes H^*(C', L_2).
\]
with the $G$-action on both sides taken into account.

\medskip

In the following Lemma we perform necessary computations which will be used later for computing Hochschild homology of $S$
as well as cohomology of $dg$-algebras of the exceptional collections on $S$.

\begin{lemma}\label{cohKK}
Some cohomology ranks $h^0(\KK(i,j)) + q h^1(\KK(i,j)) + q^2 h^2(\KK(i,j))$ are given in the table:
\[
\begin{tabular}{|c|c|c|c|c||c||c|c|c|c|c|c|}
\hline
$_j \;^i$ & -3 	&-2 &  -1 	& 0 &  1    &   2  & 3 & 4 & 5 \\
\hline
4 	&   &  & & & $0 $	    & $3$ &   $6$ &  $9$ & \\
\hline
3 	&   &  & & & $3+3q$   & $3+q$ & $4$ & $6$ & $8$ \\
\hline
2 	&   &  & & & $0$ & $q^2$ & $3+q$ & $3$ & \\
\hline
\hline
1 & & $3q^2+3q$ & $0$ & $0$ & $0$ & $0$ & $0$ & $3+3q$ &\\
\hline
\hline
0 & & $3q^2$ & $3q^2 + q$ & $1$ & $0$ &  & & & \\
\hline
-1 & $8q^2$ &  $6q^2$ & $4q^2$ & $3q^2 + q$ & $3q^2 + 3q$ &  &  & & \\
\hline
-2 & & $9 q^2$  & $6q^2$ & $3q^2$ & $0$ &  &  & & \\
\hline
\end{tabular}
\]
\end{lemma}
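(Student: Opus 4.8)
The plan is to compute each entry of the table by combining the K\"unneth formula (\ref{Kunneth}) with the one-dimensional computations already recorded in (\ref{cohC0}), (\ref{cohC1}), and their $C'$-analogues obtained via (\ref{cohCp}). Concretely, for a bundle $\KK(i,j)$ I first pass to $\OO(i,j)$ up to the explicit torsion twist $\OO(i,j)=\KK(i,j)[-3i-3j,-3i-2j]$, so that it suffices to know $[H^*(C,\OO(i))]$ and $[H^*(C',\OO(j))]$ as elements of $\Z[q,x,y]/(x^5-1,y^5-1)$; then $[H^*(T,\KK(i,j))]$ is the product of these two polynomials times the appropriate monomial $x^{3i+3j}y^{3i+2j}$, and finally the cohomology of $S$ is extracted as the $G$-invariant part, i.e.\ the coefficient of $x^0y^0$ in that product (keeping track of the grading variable $q$). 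The table entry is then $h^0+qh^1+q^2h^2$ read off from this coefficient.

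The first step is therefore to assemble a complete list of the polynomials $[H^*(C,\OO(n))]$ for all $n$ that occur. For $0\le n\le 5$ these are given directly by (\ref{cohC0}) and (\ref{cohC1}); for $n<0$ one uses Serre duality on $C$ (\ref{Serre}) with $\omega_C=\OO(2)[1,1]$, which gives $[H^1(C,\OO(n))](x,y)=[H^0(C,\OO(2-n))](x^4,y^4)\,x^4y^4$ and $H^0(C,\OO(n))=0$; for $n>5$ one has $H^1=0$ and $H^0$ computed from the resolution $0\to\OO(n-5)\xrightarrow{X^5+Y^5+Z^5}\OO(n)\to\OO_C(n)\to 0$ on $\P^2$, i.e.\ $[H^0(C,\OO(n))]=\sum_{i+j\le n}x^iy^j-\sum_{i+j\le n-5}x^iy^j$. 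The same data for $C'$ is obtained by the substitution $x\mapsto x^2y,\ y\mapsto x^4y^3$ dictated by (\ref{cohCp}), with $\omega_{C'}=\OO(2)[1,4]$. Since the indices $i,j$ appearing in the table range over $-3,\dots,5$, only finitely many such polynomials are needed, and each is a short explicit sum.

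With these in hand, the proof is a finite, essentially mechanical verification: for each of the roughly forty cells in the table, form the product of the corresponding $C$- and $C'$-polynomials, multiply by the torsion monomial coming from the $\KK$ versus $\OO$ discrepancy, reduce modulo $x^5-1,y^5-1$, and pick out the $x^0y^0$-coefficient. For instance, for $\KK(2,2)=\omega_S$ one should recover $1+q^2$ (reflecting $h^0=0,h^1=0,h^2=1$ since $\omega_S$ has $h^0=p_g=0$), which matches the entry $q^2$ at $(i,j)=(2,2)$ up to my bookkeeping of $h^0$; such consistency checks, together with $\chi(\OO(i,j))=(i-1)(j-1)$ from Proposition \ref{lineBundles}(4) and Serre duality $\KK(i,j)\leftrightarrow\KK(2-i,2-j)$, validate individual rows and columns. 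I would present one representative computation in detail (say an acyclic case like $(i,j)=(-1,0)$ giving $3q^2+q$, and a non-acyclic case like $(1,3)$ giving $3+3q$) and assert that the remaining entries follow identically.

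The main obstacle is not conceptual but organizational: one must be scrupulous about the character bookkeeping — the difference between the $\KK$- and $\OO$-lattices, the substitution rule encoding the $C'$-action, and the reduction modulo fifth roots of unity — because a single sign error in an exponent propagates into the wrong invariant subspace and corrupts an entry. A secondary subtlety is ensuring that the cases $n\le 0$, $0\le n\le 5$, and $n\ge 5$ on each curve are handled by the correct formula, since the table straddles all three regimes; I would isolate this once in a preliminary sublemma listing $[H^*(C,\OO(n))]$ and $[H^*(C',\OO(n))]$ for $-3\le n\le 5$ so that the final table is then a transparent product-and-extract-coefficient computation.
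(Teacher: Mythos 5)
Your proposal is correct and follows essentially the same route as the paper: the K\"unneth formula combined with the equivariant character polynomials on the two factor curves, with the cohomology of $S$ read off as the coefficient of $x^0y^0$ (the $G$-invariant part). The only differences are organizational — the paper first uses Serre duality on $S$ and Kodaira vanishing to restrict attention to $i,j\ge 1$ instead of computing every cell (thereby avoiding the negative-degree curve computations you set up), and your stated consistency value $1+q^2$ for $\omega_S=\KK(2,2)$ should simply be $q^2$ since $h^0(\omega_S)=p_g(S)=0$, in agreement with the table.
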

\begin{proof}
The entries of the table are in agreement with the Serre isomorphism
\[
h^p(S, \KK(i,j)) = h^{2-p}(S, \KK(2-i,2-j)),
\]
therefore it is sufficient to consider $i,j$ from the
table with $i,j \ge 1$.
The Euler characteristic of $\KK(i,j)$ is equal to $(i-1)(j-1)$.
By Kodaira vanishing theorem there is no higher cohomology for $i, j \ge 3$.
The rest is done using 
the K\"unneth formula (\ref{Kunneth})
and (\ref{cohC0}), (\ref{cohC1}), (\ref{cohCp}) which we use
to compute:
\be{cohPolyn}\bal\;
[H^*(C, \KK(1))] &\= x^4y^3 + x^3y^4 + x^3y^3 + q x^2y^2 + q x^2y + q xy^2 \\
[H^*(C', \KK(1))] &\= x^3y^2 + y^3 + x^2  + q x^3 + q y^2 + q x^2y^3 \\
[H^*(C, \KK(2))] &\= x^3y + x^2y^2 + xy^3 + x^2y + xy^2 + xy + q \\
[H^*(C', \KK(2))] &\= x^2y^3 + xy^4 + x^4 + x^3 + y^2 + y + q\\
[H^*(C, \KK(3))] &\= x^4y^4 + x^4y^2 + x^2y^4 + x^4y + xy^4 + x^4 + y^4 + x + y + 1 \\
[H^*(C', \KK(3))] &\= x^4y^3 + x^3y^4 + x^3y^3 + x^4y + x^2y^2 + y^4 + x^2y + xy^2 + x + 1 \\
[H^*(C, \KK(4))] &\= x^4y^4 + x^4y^3 + x^3y^4 + x^4y^2 + x^3y^3 + x^2y^4 + x^3y^2 + x^2y^3 + \\ 
& \; \; \; \; + x^2y^2 + x^3 + x^2y + xy^2 + y^3 + x^2 + y^2 \\
[H^*(C', \KK(4))] &\= x^4y^4 + x^4y^2 + x^2y^4 + x^4y + x^3y^2 + x^2y^3 + x^3y + xy^3 + \\
& \; \; \; \; + y^4 + x^3 + y^3 + x^2 + xy + y^2 + x. \\
\eal\ee

\end{proof}

\begin{lemma}
The Hochschild cohomology groups $HH^*(S,\C) = \oplus_{p+q=*} H^p(S, \Lambda^q \TT_S)$ of $S$ are given below.
\[\bal
HH^0(S) &\= \C \\
HH^1(S) &\= 0 \\
HH^2(S) &\= 0 \\
HH^3(S) &\= H^2(S, \TT_S) = \C^6 \\
HH^4(S) &\= H^2(S, \Lambda^2 \TT_S) = \C^9. \\
\eal\]
\end{lemma}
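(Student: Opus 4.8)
The plan is to compute the Hochschild cohomology directly from the Hochschild--Kostant--Rosenberg decomposition $HH^*(S,\C) = \bigoplus_{p+q=*} H^p(S,\Lambda^q \TT_S)$, splitting the work according to the value of $q$. For $q = 0$ the summand is $H^p(S,\OO_S)$, and since $p_g = q(S) = 0$ we get $H^0(S,\OO_S) = \C$ and $H^1 = H^2 = 0$; this contributes only the $\C$ in $HH^0$. For $q = 2$ the summand $H^p(S,\Lambda^2\TT_S) = H^p(S,\omega_S^*)$ is computed via Serre duality from $H^{2-p}(S,\omega_S^{\otimes 2})$; by Proposition \ref{lineBundles} we have $\omega_S = \KK(2,2)$, so $\omega_S^* = \KK(-2,-2)$ and $\omega_S^{\otimes 2} = \KK(4,4)$. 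The entries of the table in Lemma \ref{cohKK} for $(i,j) = (4,4)$ and $(i,j) = (-2,-2)$ (the latter lying in the range the Serre symmetry of that lemma already records) give $H^*(S,\omega_S^*) = \C^9$ concentrated in degree $2$, hence the contribution $\C^9$ to $HH^4$ and nothing to $HH^2$ or $HH^3$ from this piece.

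The main work is the $q = 1$ term, $H^p(S,\TT_S)$. Here $\TT_S = \TT_T^G$, so I would compute $H^p(T,\TT_T)$ as a $G$-representation and take invariants. Using $\TT_T = p_1^*\TT_C \oplus p_2^*\TT_{C'}$ together with the Künneth formula (\ref{Kunneth}) (applied to each summand, tensored with the other structure sheaf factor), this reduces to knowing $H^*(C,\TT_C)$, $H^*(C,\OO_C)$ and the corresponding groups on $C'$ as graded $G$-representations. Now $\TT_C = \omega_C^* = \OO_C(-2)[4,4]$ (from $\omega_C = \OO_C(2)[1,1]$ computed in Section 2.2), and by Serre duality on the curve $H^*(C,\TT_C)$ is dual to $H^*(C,\omega_C^{\otimes 2}) = H^*(C,\OO_C(4)[2,2])$, which the formulas (\ref{cohC0}), (\ref{cohC1}) compute explicitly; similarly on $C'$ via (\ref{cohCp}). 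Assembling: $H^0(T,\TT_T) = H^0(C,\TT_C)\otimes H^0(C',\OO_{C'}) \oplus H^0(C,\OO_C)\otimes H^0(C',\TT_{C'})$, which vanishes because $H^0(C,\TT_C) = H^1(C,\OO_C(2))^* = 0$ and likewise on $C'$; so $H^0(S,\TT_S) = 0$, i.e. $S$ has no infinitesimal automorphisms. For $H^1(T,\TT_T)$ one gets $H^1(C,\TT_C)\otimes H^0(C',\OO_{C'}) \oplus H^0(C,\OO_C)\otimes H^1(C',\OO_{C'}) \oplus (\text{symmetric term})$; the $H^0(C,\OO_C)\otimes H^1(C',\OO_{C'})$-type pieces vanish since $H^1(\text{curve},\OO) = H^0(\OO)^*$ and all the genus-contributions are accounted, and the point is to verify the remaining $G$-invariant part is zero, giving $HH^2(S) = 0$ — consistent with the rigidity of $S$ and with $HH^1 = 0$. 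Finally $H^2(T,\TT_T) = H^1(C,\TT_C)\otimes H^1(C',\OO_{C'}) \oplus H^1(C,\OO_C)\otimes H^1(C',\TT_{C'})$, and taking $G$-invariants yields the $\C^6$ claimed for $HH^3(S) = H^2(S,\TT_S)$, which is the tangent space to deformations of $S$ and is known to be $6$-dimensional.

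The one genuinely delicate step is the bookkeeping of characters in the $q=1$ computation: one must track the precise $G$-action (including the twists $[3,3]$ on $\KK_C(1)$, $[3,2]$ on $\KK_{C'}(1)$, and the substitution $x \mapsto x^2y$, $y \mapsto x^4y^3$ relating $C'$ to $C$) all the way through Serre duality and Künneth, and then count the multiplicity of the trivial character $[0,0]$ in each cohomological degree. The vanishing of $HH^1$ and $HH^2$ is equivalent to the trivial character not appearing in $H^1(T,\TT_T)$, and the value $6$ for $H^2(S,\TT_S)$ is the multiplicity of $[0,0]$ in $H^1(C,\TT_C)\otimes H^1(C',\OO_{C'}) \oplus H^1(C,\OO_C)\otimes H^1(C',\TT_{C'})$; both are routine once the polynomials in (\ref{cohPolyn})-style notation are written down, but they are exactly where an arithmetic slip would hide, so I would double-check them against the independent constraints $\chi(\TT_S) = \chi(\OO_S)\cdot(\text{top Chern data}) = 2K_S^2 - 10\chi(\OO_S) = 6$ and against the HKR-predicted Euler characteristic $\sum(-1)^p h^p(\TT_S)$.
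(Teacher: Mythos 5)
Your overall strategy is the paper's: reduce via $H^p(S,\Lambda^q\TT_S)=H^p(T,\Lambda^q\TT_T)^G$, split $\TT_T=p_1^*\TT_C\oplus p_2^*\TT_{C'}$ and $\Lambda^2\TT_T=p_1^*\TT_C\otimes p_2^*\TT_{C'}$, and count trivial characters via K\"unneth. But you miss the shortcut that makes the paper's proof two lines: since $\omega_C=\KK_C(2)$ and $\omega_{C'}=\KK_{C'}(2)$, one has $p_1^*\TT_C=\KK(-2,0)$ and $p_2^*\TT_{C'}=\KK(0,-2)$, so all three bundles $\KK(-2,0)$, $\KK(0,-2)$, $\KK(-2,-2)$ already appear in the table of Lemma \ref{cohKK} (entries $3q^2$, $3q^2$, $9q^2$), exactly as you use that table for the $\Lambda^2$ piece. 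Redoing the K\"unneth and character bookkeeping by hand is not wrong, but you never actually carry it out: the two decisive numbers, $h^1(S,\TT_S)=0$ and $h^2(S,\TT_S)=6$, are asserted and deferred as ``routine,'' and your proposed safeguard $\chi(\TT_S)=2K_S^2-10\chi(\OO_S)=6$ cannot distinguish $(h^1,h^2)=(0,6)$ from $(1,7)$, so it is not a substitute for the count. Worse, the justification you do offer for the value $6$ is circular and misattributed: $H^2(S,\TT_S)$ is the obstruction space, not the tangent space to deformations; the deformation tangent space is $H^1(S,\TT_S)$, whose vanishing is the rigidity of $S$, and the dimension of $H^2(S,\TT_S)$ is precisely what this lemma is computing, so it cannot be quoted as ``known.''

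There are also two local slips worth fixing. First, $H^0(C,\TT_C)$ is Serre-dual to $H^1(C,\omega_C^{\otimes 2})=H^1(C,\OO(4)[2,2])$, not to $H^1(C,\OO(2))$; the latter is one-dimensional by (\ref{cohC1}), so the equality ``$H^1(C,\OO_C(2))^*=0$'' is false as written (the correct conclusion $H^0(C,\TT_C)=0$ of course holds, $C$ having genus $6$). Second, the K\"unneth summand of $H^1(T,p_1^*\TT_C)$ that vanishes is $H^0(C,\TT_C)\otimes H^1(C',\OO_{C'})$, and it vanishes because $H^0(C,\TT_C)=0$; your stated reason ``$H^1(\mathrm{curve},\OO)=H^0(\OO)^*$'' is not true ($H^1(C,\OO)\cong H^0(C,\omega_C)^*\cong\C^6$). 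With these corrections, and either the explicit character count or a direct appeal to the table of Lemma \ref{cohKK}, the argument closes.
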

\begin{proof}
We have
\[
H^p(S, \Lambda^q \TT_S) =  H^p(T, \Lambda^q \TT_T)^G
\]
and
\[\bal
\TT_T &= p_1^*\TT_C \oplus p_2^*\TT_{C'} = \KK(-2,0) \oplus \KK(0,-2) \\
\Lambda^2 \TT_T &= p_1^*\TT_C \otimes p_2^*\TT_{C'} = \KK(-2,-2). \\
\eal\]

Now the cohomology groups in question are found in the table of Lemma \ref{cohKK}.

\end{proof}

Next we would like to compute the Grothendieck group $K_0(S)$ of the Beauville surface $S$.
By the results of Kimura \cite{Kim}, Bloch conjecture is known for all surfaces with $p_g = 0$ which 
admit a covering by a product of curves, hence $CH_0(S) = \Z$ for the Beauville surface $S$. 
Therefore by Lemma \ref{K0lemma} below the Grothendieck group of $S$ has a decomposition
\[
K_0(S) = \Z^4 \oplus (\Z/5)^2.
\]

\begin{lemma}\label{K0lemma}
Let $X$ be a smooth projective surface such that the degree morphism
$CH_0(X) \to \Z$
is an isomorphism. Then we have a (non-canonical) isomorphism
\[
K_0(X) \cong \Z^2 \oplus Pic(X).
\]
\end{lemma}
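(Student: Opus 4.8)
The plan is to use the classical theory of the Chern character and the topological filtration on $K_0$ of a smooth projective surface. Write $F^\udot K_0(X)$ for the codimension filtration: $F^1 K_0(X)$ is the kernel of the rank map $K_0(X) \to \Z$, and $F^2 K_0(X)$ is the kernel of the pair (rank, determinant), i.e. the subgroup generated by classes of sheaves supported in dimension $0$. We then have a canonical short exact sequence
\[
0 \to F^2 K_0(X) \to F^1 K_0(X) \to Pic(X) \to 0,
\]
where the last map sends a class in $F^1$ to its determinant line bundle, together with the rank exact sequence
\[
0 \to F^1 K_0(X) \to K_0(X) \to \Z \to 0.
\]
The second sequence splits by the class $[\OO_X]$, so it suffices to split the first sequence and to identify $F^2 K_0(X)$ with $\Z$.

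First I would identify $F^2 K_0(X)$. For any smooth projective variety there is a surjection $CH_0(X) \to F^2 K_0(X)/F^3 K_0(X) = F^2 K_0(X)$ (on a surface $F^3 = 0$ for dimension reasons), given by sending a point to the class of its structure sheaf; in fact for surfaces this map $CH_0(X) \to F^2 K_0(X)$ is an isomorphism — this is a theorem essentially due to the Riemann--Roch machinery, and the inverse is recovered by the second Chern class $c_2$. Under the hypothesis that $\deg : CH_0(X) \to \Z$ is an isomorphism, we therefore get $F^2 K_0(X) \cong \Z$, generated by the class $[\OO_x]$ of a point.

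Next I would split the sequence $0 \to F^2 K_0(X) \to F^1 K_0(X) \to Pic(X) \to 0$. A natural set-theoretic section $Pic(X) \to F^1 K_0(X)$ is $L \mapsto [L] - [\OO_X]$; its failure to be a homomorphism is measured by the formula $[L \otimes M] - [\OO_X] - ([L]-[\OO_X]) - ([M]-[\OO_X]) = -(c_1(L) \cdot c_1(M))\,[\OO_x]$ in $F^2 K_0(X) \cong \Z$ (this follows from Riemann--Roch on the surface, comparing Euler characteristics / Chern characters of $L\otimes M$ with $L$ and $M$). So $L \mapsto [L]-[\OO_X]$ is a group homomorphism modulo the intersection pairing. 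To correct it, I would use that the intersection form on $Pic(X)$ is a symmetric bilinear form with values in $\Z$; on the free part it is even (up to an issue I address below), and more to the point, $Pic(X) \cong \Z^{\rho} \oplus Pic(X)_{tors}$ and one can choose an explicit integral ``half'' of the pairing on a basis — equivalently, one picks for each basis element $L_i$ an integer correction $a_i$ and defines $\phi(L_i) = [L_i] - [\OO_X] + a_i [\OO_x]$ so that $\phi$ extends to a homomorphism. Concretely, writing the quadratic refinement, the map $L \mapsto [L] - [\OO_X] + \tfrac12\bigl((c_1(L))^2 - c_1(L)\cdot c_1(\omega_X)\bigr)[\OO_x]$ — which is exactly $\mathrm{ch}$-adjusted so that it equals $[L]-[\OO_X]+(\chi(L)-\chi(\OO_X))[\OO_x]$ up to reindexing — is additive; one must only check this lands in $K_0(X)$ integrally, which it does after the torsion is handled, because $\chi(L) - \chi(\OO_X) \in \Z$. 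This yields a splitting $K_0(X) \cong \Z[\OO_X] \oplus Pic(X) \oplus \Z[\OO_x] \cong \Z^2 \oplus Pic(X)$.

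The main obstacle is the integrality of the splitting in the presence of $2$-torsion in the intersection-pairing refinement and, relatedly, making sure the torsion subgroup $Pic(X)_{tors}$ maps compatibly: a torsion line bundle $\tau$ has $c_1(\tau)$ numerically trivial, so $[\tau] - [\OO_X] \in F^2 K_0(X) \cong \Z$ is automatically zero (a numerically trivial line bundle $L$ has $\chi(L) = \chi(\OO_X)$ by Riemann--Roch, hence $\mathrm{ch}(L) = \mathrm{ch}(\OO_X)$ in the relevant graded piece, and $F^2$ is detected by $c_2$-type data which vanishes), so in fact $L \mapsto [L]-[\OO_X]$ is already a homomorphism on $Pic(X)_{tors}$ and the only real content is the free quotient. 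Thus the crux is: the composite $Pic(X)/tors \times Pic(X)/tors \xrightarrow{(\,\cdot\,)} \Z$ admits an integral quadratic refinement $q$ with $q(L+M)-q(L)-q(M) = (L\cdot M)$, which holds because $q(L) := \tfrac12\bigl(L^2 - L\cdot K_X\bigr) = \chi(L) - \chi(\OO_X)$ takes integer values by Riemann--Roch. Feeding this $q$ into the section produces the desired isomorphism, completing the proof.
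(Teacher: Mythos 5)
Your argument is correct and rests on the same skeleton as the paper's proof --- the topological filtration with $F^0/F^1 \cong \Z$, $F^1/F^2 \cong Pic(X)$, $F^2 \cong CH_0(X) \cong \Z$ (Riemann--Roch without denominators plus the hypothesis on $CH_0$), and the splitting of the rank sequence by $[\OO_X]$ --- but you split the remaining extension $0 \to \Z \to F^1 \to Pic(X) \to 0$ by a genuinely different device. The paper constructs a \emph{retraction} $F^1 \to \Z$: for a finitely generated abelian group it suffices that the image of $[\OO_P]$ in $F^1/tors$ be non-divisible, and this follows from $1 = \chi(\OO,\OO_P) = a\,\chi(\OO,A)$ whenever $[\OO_P] = aA + (\text{torsion})$. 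You instead build an explicit \emph{section} $Pic(X) \to F^1$ by correcting $L \mapsto [L]-[\OO_X]$ with the integral quadratic refinement $q(L) = \tfrac12\bigl(L^2 - L\cdot K_X\bigr) = \chi(L)-\chi(\OO_X)$ of the intersection form. Both routes ultimately hinge on the same Riemann--Roch integrality; the paper's retraction argument is shorter and sidesteps all discussion of signs and torsion, while yours is more constructive in that it exhibits the summand of $F^1$ complementary to $\Z[\OO_x]$. Two small slips in your writeup, neither of which affects the outcome: the defect of additivity is $([L]-[\OO_X])([M]-[\OO_X]) = +(L\cdot M)[\OO_x]$, so the correction term should be $-q(L)[\OO_x]$ rather than $+q(L)[\OO_x]$; and for a torsion bundle $\tau$ the class $[\tau]-[\OO_X]$ is \emph{not} in $F^2$ (its determinant is $\tau$) and is certainly not zero --- what is true, and what you actually need, is that $q(\tau)=0$ and that the cross-terms $c_1(\tau)\cdot c_1(M)$ vanish, so that no correction is required on the torsion part and the corrected map remains a section of the determinant, hence injective there.
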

\begin{proof}
Consider the topological filtration $F^i \subset K_0(X)$ given by the codimension of support \cite{Ful}. By Riemann-Roch theorem without denominators \cite{Ful}
we have \[\bal
F^0 / F^1 &\cong \Z \\
F^1 / F^2 &\cong Pic(X) \\
F^2 &\cong CH_0(X) \cong \Z. \\
\eal\]

Extension $0 \to F^1 \to F^0 \to \Z \to 0$ always splits for group-theoretic reasons,
so 
\[
K_0(S) = \Z \oplus F^1.
\]

We have a short exact sequence
\be{seqF1}
0 \to \Z \overset{i}\to F^1 \to Pic(X) \to 0.
\ee
We have to prove that (\ref{seqF1}) splits, that is
there exists a retraction $F^1 \to \Z$.
In general such a retraction exists whenever 
the image of $i(1)$ in $F^1/tors$ is not divisible by any integer $a>1$.
Recall that $i(1) = [\OO_P] \in F^1 \subset K_0(S)$ where $P$ is a point of S.
Assume that $[\OO_P] = a \cdot A + \alpha$, where $\alpha$ is a torsion element.
Then
\[
1 = \chi(\OO, \OO_P) = \chi(\OO, aA + \alpha) = a \chi(\OO,A)
\]
since $a$ is positive integer and $\chi(\OO,A)$ is integer last equality implies $a = 1$.

\end{proof}

\section{Exceptional collections on the Beauville surface}

\subsection{Numerically exceptional collections and helices}

We call a sequence of line bundles
\[
L_1, \dots, L_n
\]
on a variety \emph{numerically exceptional} if for all $j > i$
\[
\chi(L_j, L_i) = \sum_l (-1)^l \dim Ext^l( L_j, L_i ) = 0.
\]

Any exceptional collection is obviously numerically exceptional as well.
We note that in order to speak about numerically exceptional collections
we only need to consider classes of $L_i$'s modulo torsion.
This implies that a sequence
$L_1, \dots, L_n$ forms a numerically exceptional collection on $S$
if and only if any twist $L_1(\chi_1), \dots, L_n(\chi_n)$ does. 
In particular we will not make a distinction between $\OO(i,j)$ and $\KK(i,j)$
when investigating numerically exceptional collections.

\begin{lemma}\label{numExceptional} A sequence
\[
\OO, L_1, L_2, L_3 
\]
of line bundles on $S$ is numerically exceptional if and only if
it belongs to one of the following four numerical types:
\[\bal
(I_c) & \; \OO, \OO(-1,0), \OO(c-1,-1), \OO(c-2,-1), \; c \in \Z \\
(II_c) & \; \OO, \OO(0,-1), \OO(-1,c-1), \OO(-1,c-2), \; c \in \Z \\
(III_c) & \; \OO, \OO(-1,c), \OO(-1,c-1), \OO(-2,-1), \; c \in \Z \\
(IV_c) & \; \OO, \OO(c,-1), \OO(c-1,-1), \OO(-1,-2), \; c \in \Z. \\
\eal\]
\end{lemma}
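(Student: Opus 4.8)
The plan is to translate the numerical-exceptionality conditions into explicit arithmetic constraints on the bidegrees and then enumerate solutions. Write $L_k = \OO(i_k, j_k)$ for $k=1,2,3$ (torsion twists are irrelevant for numerical exceptionality, by Proposition \ref{lineBundles}(4) and the remark preceding this lemma), with the convention $L_0 = \OO$, so $(i_0,j_0) = (0,0)$. By Proposition \ref{lineBundles}(4) we have $\chi(L_a, L_b) = \chi\big(\OO(i_b - i_a, j_b - j_a)\big) = (i_b - i_a - 1)(j_b - j_a - 1)$. Numerical exceptionality of $\OO = L_0, L_1, L_2, L_3$ then amounts to the six equations $(i_a - i_b - 1)(j_a - j_b - 1) = 0$ for each pair $0 \le b < a \le 3$; that is, for every such pair either $i_a - i_b = 1$ or $j_a - j_b = 1$.

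First I would record the three ``$b=0$'' conditions: for each $a \in \{1,2,3\}$ either $i_a = 1$ or $j_a = 1$. Combined with the pairwise conditions among $L_1, L_2, L_3$, this is a small combinatorial problem. The key structural observation is that the relation ``$x$ differs from $y$ by $1$ in the first coordinate or in the second'' behaves like a tournament-type constraint, and one cannot have too many indices all lying on ``$i = 1$'' or all on ``$j = 1$'' simultaneously without forcing a contradiction with the $b>0$ conditions (e.g. if $i_1 = i_2 = 1$ then $i_2 - i_1 = 0 \neq 1$, forcing $j_2 - j_1 = 1$, etc.). So I would do a case split according to how many of $L_1, L_2, L_3$ satisfy $i_a = 1$ versus $j_a = 1$ (with the ``both'' case, $i_a = j_a = 1$, i.e. $L_a = \OO(1,1)$, handled separately — note $\chi(\OO,\OO(1,1)) = 0$ so it is a priori allowed against $\OO$, but one checks it is incompatible with the remaining constraints, or leads back into one of the listed families). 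In each branch one propagates the differences-by-one constraints and solves a linear system over $\Z$, leaving one free integer parameter, which becomes the $c$ in the statement.

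The bookkeeping that produces exactly the four families is the substantive part: I would organize it as (a) the subcase $j_1 = 1$ is impossible because then $\chi(\OO, L_1) = 0$ but one shows the constraints from $L_2, L_3$ cannot all be met — actually $\OO(0,1)$ against $\OO$ is fine, so more carefully the branches are distinguished by which of $i_1, j_1$ equals $1$, then within that by the analogous choice for $L_2$ relative to both $\OO$ and $L_1$, and so on; after eliminating the inconsistent combinations one is left with four consistent ``shapes'' of the triple of bidegrees, and reading off the general solution of each gives precisely $(I_c), (II_c), (III_c), (IV_c)$. Symmetry helps cut the work: the involution swapping the two $\P^1$-factors (i.e. $(i,j) \mapsto (j,i)$) interchanges $(I_c) \leftrightarrow (II_c)$ and $(III_c) \leftrightarrow (IV_c)$, so it suffices to analyze the branches with, say, $i_1 = 1$ and then apply the symmetry.

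The main obstacle I anticipate is not any single hard step but rather making the case analysis genuinely exhaustive and non-redundant: there are several overlapping ways the six ``difference equals $1$'' conditions can be satisfied, some branches collapse into others after reindexing, and some apparently-new branches turn out to violate a condition that was not used to define the branch. I would therefore set up the enumeration as a decision tree on the three ``$b=0$'' dichotomies first (which $i_a$ or $j_a$ equals $1$), prune the leaves that already fail a $b>0$ condition, and only then solve the surviving linear systems; keeping track of the factor-swap symmetry throughout is what makes this manageable rather than an eight-plus-case slog.
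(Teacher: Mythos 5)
Your overall strategy is the same as the paper's: use Proposition \ref{lineBundles}(4) to turn numerical exceptionality into the condition that certain difference vectors have a distinguished coordinate, then enumerate (the paper states this reduction and leaves the enumeration to the reader). But there is a sign error at the very first step that invalidates everything downstream. You correctly derive $\chi(L_a,L_b)=\chi(\OO(i_b-i_a,\,j_b-j_a))=(i_b-i_a-1)(j_b-j_a-1)$ for $b<a$, but then impose the six equations $(i_a-i_b-1)(j_a-j_b-1)=0$, i.e.\ with $a$ and $b$ swapped; these are not the same equations. The correct conclusion is that for each pair $b<a$ either $i_a-i_b=-1$ or $j_a-j_b=-1$; in particular each $L_a$ must have $i_a=-1$ or $j_a=-1$, not $+1$. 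This is consistent with the statement (every $L_1$ in the four families is one of $\OO(-1,0)$, $\OO(0,-1)$, $\OO(-1,c)$, $\OO(c,-1)$), whereas your $+1$ conditions would force $L_1$ to have a coordinate equal to $+1$ and the enumeration would produce the ``reversed'' families, not $(I_c),\dots,(IV_c)$. The same confusion shows up concretely in your aside about $\OO(1,1)$: the relevant quantity is $\chi(\OO(1,1),\OO)=\chi(\OO(-1,-1))=(-2)(-2)=4\neq 0$, so $\OO,\OO(1,1)$ is already not numerically exceptional; you computed $\chi(\OO,\OO(1,1))$, which is the Euler characteristic in the wrong direction.

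Once the sign is fixed, your plan (decision tree on which coordinate of each $L_a$ equals $-1$, pruning via the pairwise conditions, exploiting the symmetry $(i,j)\mapsto(j,i)$ of the numerical formula to halve the work) is a perfectly reasonable way to carry out the enumeration the paper omits --- though note that this symmetry is only a symmetry of the Euler-characteristic formula, not an automorphism of $S$ (there are no ``$\P^1$-factors'' here; $S$ is a quotient of a product of quintic curves). The enumeration itself is still only sketched in your write-up, but since the paper also leaves it to the reader, the substantive defect to repair is the sign.
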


We note that $I_0 = III_0$, $II_0 = IV_0$ and also that types $I_c$ and $II_c$
are characterized by the property $L_3 \cong L_1 \otimes L_2$.

\begin{proof}
By Proposition \ref{lineBundles}(4) the sequence
\[
\OO, \OO(a_1,b_1), \OO(a_2,b_2), \OO(a_3,b_3)
\]
is numerically exceptional if and only if all of the vectors $(a_i,b_i)$,
$(a_j - a_i, b_j - b_i), \, j > i$ have one of the coordinates equal to -1.
The rest of the proof is left to the reader.
\end{proof}

If we consider a general sequence of line bundles
\be{seq1}
L_0, L_1, L_2, L_3
\ee
on $S$, then it is (numerically) exceptional if and only if
\be{seq2}
\OO, L_1 \otimes L_0^*, L_2 \otimes L_0^*, L_3 \otimes L_0^* 
\ee
is (numerically) exceptional.
We say that the sequence (\ref{seq1}) is of type $I_c$, $II_c$, $III_c$ or $IV_c$
if (\ref{seq2}) is of this type.

\medskip

In order to study exceptional collections on $S$ more efficiently we will
use so-called helices (\cite{GR}, \cite{Bon}, \cite{BP}).
We call a sequence $E_\bullet = (E_i, i \in \Z)$ of sheaves on a smooth variety $X$ 
\emph{a helix of period $n$} if
\[
E_{i-kn} = E_i \otimes \omega_X^{\otimes k} 
\]
for all $0 \le i \le n-1, \; k \in \Z$.
\footnote{The definition of helix we use coincides with that from \cite{Bon} up to shifts
which we have dropped for convenience.
The definition of helix in \cite{BP} which is given in terms of mutations rather 
than the Serre functor differs from ours since the collections we consider are not full.}
Given a sequence $E_0,\dots,E_{n-1}$ of sheaves
on $X$ we can extend it to a helix by the formula above.

Any subsequence of a helix consisting of $n$ consecutive elements $E_a, E_{a+1}, \dots,
E_{a+n-1}$ will be called \emph{a spire}.
By Serre duality an arbitrary spire of a helix is a (numerically) exceptional collection if
and only if $E_0,\dots,E_{n-1}$ is a (numerically) exceptional collection.
We will sometimes represent a helix $E_\bullet$ as a sequence of $n+1$ consecutive spires 
\[
\EE_a \to \EE_{a+1} \to \dots \to \EE_{a+n},
\]
for some $a \in \Z$ where $\EE_j = \{E_j, E_{j+1}, \dots, E_{j+n-1} \}$.
Note that since $n$ is the period of $E_\bullet$, $\EE_{a+n}$ differs from $\EE_a$
by a twist by $\omega_X$.

We now may ask what are the helices formed by numerically exceptional collections
of Lemma \ref{numExceptional}.
The proof of the following Lemma is straightforward from definitions.

\begin{lemma}\label{helix}
Numerically exceptional helices on $S$ formed by line bundles
belong to one of the two families:
\[\bal
I_c & \to IV_c \to I_{-c} \to IV_{-c} \to I_c, \; c \in \Z \\
II_c & \to III_c \to II_{-c} \to III_{-c} \to II_c, \; c \in \Z.\\
\eal\]
\end{lemma}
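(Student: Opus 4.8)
The plan is to unwind the definitions and reduce the statement to a finite check among the four numerical types of Lemma \ref{numExceptional}. First I would recall that a helix of period $4$ on $S$ is determined by its spire $E_0,\dots,E_3$ via the rule $E_{i-4k} = E_i\otimes\omega_S^{\otimes k}$, and that $\omega_S = \OO(2,2)$ modulo torsion by Proposition \ref{lineBundles}(2); since numerical exceptionality only depends on Picard classes modulo torsion, I may work throughout with the classes $\OO(i,j)$ and the twist $\otimes\omega_S$ becomes the shift $(i,j)\mapsto(i+2,j+2)$. By the remark following Lemma \ref{numExceptional} (and the normalization reducing any spire to one beginning with $\OO$), a numerically exceptional helix formed by line bundles has, after shifting the index so that it starts at $E_0=\OO$, the shape $\EE_0\to\EE_1\to\EE_2\to\EE_3\to\EE_4$, where $\EE_0$ is one of $I_c,II_c,III_c,IV_c$ and $\EE_4 = \EE_0\otimes\omega_S$.

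The key computation is then to take each of the four types from Lemma \ref{numExceptional}, form the doubly-infinite sequence by the helix rule, and read off the four consecutive spires $\EE_1,\EE_2,\EE_3$ (each renormalized by an overall twist so that its first term is $\OO$, which is legitimate since numerical exceptionality is twist-invariant). For instance, starting from $\EE_0 = I_c = (\OO,\ \OO(-1,0),\ \OO(c-1,-1),\ \OO(c-2,-1))$, the next spire $\EE_1$ is obtained by dropping $\OO$ and appending $\OO\otimes\omega_S = \OO(2,2)$; after twisting by $\OO(1,0)$ to renormalize, one checks directly that $\EE_1$ is of type $IV_c$. Iterating three more times one gets $IV_c\to I_{-c}\to IV_{-c}\to I_c$, closing up after four steps as it must since $\EE_4 = \EE_0\otimes\omega_S$ is $I_c$ up to twist. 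The same bookkeeping applied to $\EE_0 = II_c$ yields the chain $II_c\to III_c\to II_{-c}\to III_{-c}\to II_c$, and one checks that types $I,IV$ never produce a type $II$ or $III$ spire and vice versa, so these are exactly the two families; the appearance of $-c$ in place of $c$ is forced by the reversal of roles of the two factors under the shift, exactly as in the symmetry relations $I_0 = III_0$, $II_0 = IV_0$ already noted.

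The main obstacle — really the only one — is purely organizational: one must be careful about the renormalizing twists and about the index shift that identifies $\EE_{a+4}$ with $\EE_a\otimes\omega_S$, so that the $c$'s and signs are tracked correctly; there is no conceptual difficulty, and indeed the excerpt already flags that ``the proof of the following Lemma is straightforward from definitions.'' I would present it by exhibiting explicitly, for a representative type (say $I_c$), the four-term cyclic list of spires with the intermediate twists written out, and then remark that the computation for $II_c$ is identical with the roles of $\OO(1,0)$ and $\OO(0,1)$ interchanged, leaving the remaining verifications to the reader.
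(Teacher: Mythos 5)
Your overall strategy---extend each of the four numerical types of Lemma \ref{numExceptional} to a helix, renormalize each consecutive spire to begin with $\OO$, and read off the resulting cycle of types---is exactly the intended argument; the paper offers no proof beyond declaring the lemma straightforward from the definitions, so there is no genuinely different route to compare against. However, as written your computation contains a sign error that would derail it if carried out literally. From the helix rule $E_{i-kn}=E_i\otimes\omega_X^{\otimes k}$ one gets $E_{i+n}=E_i\otimes\omega_X^{-1}$, so passing to the next spire appends $E_4=E_0\otimes\omega_S^{-1}=\OO(-2,-2)$ (mod torsion), not $E_0\otimes\omega_S=\OO(2,2)$ as you state. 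Starting from $I_c$ and appending $\OO(2,2)$, your renormalized $\EE_1$ would be $\OO,\ \OO(c,-1),\ \OO(c-1,-1),\ \OO(3,2)$; since $(3,2)$ has no coordinate equal to $-1$, this is not even numerically exceptional, let alone of type $IV_c$. With the correct sign one appends $\OO(-2,-2)$ and obtains $\OO,\ \OO(c,-1),\ \OO(c-1,-1),\ \OO(-1,-2)$, which is precisely $IV_c$, and iterating does produce the two stated cycles. The same correction applies to your closing-up remark: $\EE_4$ differs from $\EE_0$ by a twist by $\omega_S^{-1}$. Once this sign is fixed, the rest of your plan (twist-invariance of numerical exceptionality, reduction to the four types of Lemma \ref{numExceptional}, and the check that the families $\{I,IV\}$ and $\{II,III\}$ do not mix) is sound and completes the proof.
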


\subsection{Acyclic line bundles and exceptional collections}

We will now  investigate which of the numerically exceptional collections
of Lemma \ref{numExceptional} can be lifted to exceptional collections.
Here by a lift we mean a lift with respect to the morphism
\[
\Z^2 \oplus \widehat{G} = Pic(S) \to Pic(S)/tors = \Z^2,
\]
that is a choice of a character $\chi \in \widehat{G}$.
We will need a detailed study of the characters that may appear
in the cohomology groups of sheaves on $T$.

For a $G$-linearized line bundle on $T$ we define the acyclic set of $L$ as
\[
\AA(L) := \{ \chi \in Hom(G, \C^*) \; : \; \chi \notin [H^*(T,L)] \}
\]
By definition $L(\chi)$ is acyclic if and only if $-\chi \in \AA(L)$.
Since by Proposition \ref{lineBundles}(1), any line bundle on $S$ is isomorphic to some $\KK(i,j)(\chi)$,
we see from the next lemma that there are $39$ isomorphism classes of 
acyclic line bundles on $S$.

\begin{lemma} \label{acyclicSets} The only nonempty acyclic sets of line bundles $\KK(i,j)$
on $S$ are:
\[\bal
\AA(\KK(1,-2)) &\= \{ [0,0] \} \\
\AA(\KK(1,-1)) &\= \{  [0, 3], [2, 0], [3, 2]\} \\
\AA(\KK(1,0)) &\= \{ [0, 0], [0, 1], [0, 2], [1, 4], [2, 3], [3, 0], [4, 0] \} \\
\AA(\KK(1,1)) &\= \{ [0, 0], [1, 2], [2, 1], [2, 2], [3, 3], [3, 4], [4, 3] \} \\
\AA(\KK(1,2)) &\= \{ [0, 0], [0, 3], [0, 4], [1, 0], [2, 0], [3, 2], [4, 1] \} \\
\AA(\KK(1,3)) &\= \{ [0, 2], [2, 3], [3, 0]\} \\
\AA(\KK(1,4)) &\= \{ [0, 0] \} \\
\AA(\KK(-1,1)) &\= \{ [0,0] \} \\
\AA(\KK(0,1)) &\= \{ [0, 0], [3, 3], [3, 4], [4, 3] \} \\
\AA(\KK(2,1)) &\= \{ [0, 0], [1, 2], [2, 1], [2, 2] \} \\
\AA(\KK(3,1)) &\= \{ [0, 0] \}. \\
\eal\]
\end{lemma}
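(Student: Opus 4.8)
The plan is to read each acyclic set $\AA(\KK(i,j))$ off the graded $G$-module $H^*(T,\KK(i,j))$, whose class in $K_0(Rep_{\Z_+}G)=\Z[q,x,y]/(x^5-1,y^5-1)$ factors, by the K\"unneth formula (\ref{Kunneth}), as
\[
[H^*(T,\KK(i,j))]=[H^*(C,\KK(i))]\cdot[H^*(C',\KK(j))].
\]
Both curve factors are explicit: they are tabulated for $1\le i,j\le 4$ in (\ref{cohPolyn}), and for all other values they follow from (\ref{cohC0}), (\ref{cohC1}), (\ref{cohCp}) together with Serre duality on the curves, $\KK_C(n)^*\otimes\omega_C\cong\KK_C(2-n)$ and its analogue on $C'$. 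Since all coefficients are nonnegative, for the purpose of deciding which characters occur we set $q=1$ and work in the group ring $\Z[G]=\Z[x,y]/(x^5-1,y^5-1)$; then $\AA(\KK(i,j))$ is exactly the set of basis elements missing from the product above (write $\AA_C$, $\AA_{C'}$ for the analogous sets on the curves). So the statement reduces to a finite list of polynomial multiplications modulo $(x^5-1,y^5-1)$, with the zero coefficients recorded.

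Two reductions cut this down. First, equivariant Serre duality on $T$: since $\omega_T=\KK(2,2)$ by Proposition \ref{lineBundles}, the isomorphism (\ref{Serre}) gives $H^k(T,\KK(i,j))\cong H^{2-k}(T,\KK(2-i,2-j))^*$ as $G$-modules, hence $\AA(\KK(i,j))$ is the image of $\AA(\KK(2-i,2-j))$ under $x^ay^b\mapsto x^{4a}y^{4b}$; the list in the statement is visibly stable under this involution, so it suffices to treat, say, the range $i+j\le 2$. Second, and crucially: if every character of $G$ already appears in one of the two factors, it appears in the product. Indeed, if $\AA_C(\KK(i))=\emptyset$ then $[H^*(C,\KK(i))]=\sum_{g\in G}g+(\text{effective})$ after setting $q=1$, and since $[H^*(C',\KK(j))]$ is a nonzero effective class (one has $h^*(C',\KK(j))>0$ for every $j$) the product has strictly positive coefficient at every basis element; the same applies if $\AA_{C'}(\KK(j))=\emptyset$. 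From the description of $H^0(C,\OO(n))$ as the degree-$n$ part of $\C[X,Y,Z]/(X^5+Y^5+Z^5)$ (cf.\ (\ref{cohC0})), together with $H^{\ge 1}$-vanishing for $n\ge 3$, one checks that for $n\ge 8$ every character of $G$ is realized by a monomial $X^aY^bZ^c$ with $c\le 4$, so $\AA_C(\KK(n))=\emptyset$ there; by curve Serre duality $\AA_C(\KK(n))=\emptyset$ also for $n\le -6$, and (\ref{cohCp}) yields the same statements for $C'$. Hence $\AA(\KK(i,j))$ can be nonempty only for $(i,j)$ in the finite box $\{-5,\dots,7\}^2$, which the Serre involution reduces to $85$ pairs.

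It then remains to perform the finite computation: for each of these pairs multiply $[H^*(C,\KK(i))]\cdot[H^*(C',\KK(j))]$ in $\Z[G]$ and list the basis elements of coefficient $0$. One finds that every such product has strictly positive coefficient at every basis element except for the eleven line bundles named in the statement, for which the missing characters are precisely those displayed. Multiple consistency checks accompany the computation: the total dimensions must agree with $|(i-1)(j-1)|+2h^1(S,\KK(i,j))$ and with the table of Lemma \ref{cohKK}; the finer $q$-graded products must reproduce the polynomials $h^0+qh^1+q^2h^2$ there; the output must be invariant under the Serre involution above; and the cardinalities must sum to the $39$ acyclic classes on $S$ announced before the statement. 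I expect the only real obstacle to be the bulk and bookkeeping of this last step, which is purely mechanical once the range has been reduced --- in practice the $85$ polynomial multiplications are best carried out with a computer algebra system.
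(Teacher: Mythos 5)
Your argument is correct, but your reduction to finitely many cases is genuinely different from (and substantially heavier than) the one in the paper. Both proofs rest on the K\"unneth factorization (\ref{Kunneth}) followed by a finite character count, and your two supporting claims are sound: if one curve factor realizes every character then so does the product (since $(\sum_{g}g)\cdot h$ is strictly positive for any nonzero effective $h$), and indeed $\AA_C(\KK(n))=\emptyset$ for $n\ge 8$ because every character class is hit by a monomial $X^aY^bZ^c$ with $c\le 4$ once $a+b$ can reach $8$; together with curve Serre duality and the substitution (\ref{cohCp}) this correctly confines the problem to the box $\{-5,\dots,7\}^2$, i.e.\ $85$ products after the Serre involution on $S$. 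What you miss is the paper's first and sharpest reduction, which you relegate to a consistency check: by Proposition \ref{lineBundles}(4) one has $\chi(\KK(i,j)(\chi))=(i-1)(j-1)$ for \emph{every} twist $\chi$, so no line bundle with $i\ne 1$ and $j\ne 1$ can be acyclic and $\AA(\KK(i,j))=\emptyset$ off the two lines $i=1$ and $j=1$. Combined with Serre duality (reducing to $i,j\ge 1$) and a monotonicity observation (multiplication by a section shows $\AA(\KK(1,5))=\emptyset$ and $\AA(\KK(4,1))=\emptyset$ propagate upward), the paper is left with only the eight bundles $\KK(1,1),\dots,\KK(1,5),\KK(2,1),\KK(3,1),\KK(4,1)$, each a single product of two polynomials from (\ref{cohPolyn}) that can be expanded by hand. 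So your route buys nothing except independence from Riemann--Roch, at the cost of turning a hand computation into a computer-algebra one; I would incorporate the Euler-characteristic observation up front and keep your ``all characters in one factor'' lemma only to dispose of the tails $j\ge 5$ and $j\le -3$ along the line $i=1$ (and symmetrically), where it serves the same purpose as the paper's monotonicity step.
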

\begin{proof}
Since by Proposition \ref{lineBundles}(4) any bundle $\KK(i,j)(\chi)$ with $i \ne 1$ and $j \ne 1$ 
is not acyclic we restrict to the cases $i = 1$ or $j = 1$.
We note in addition that our claim is consistent with the Serre duality: $\AA(\KK(i,j))$ is in duality with $\AA(\KK(2-i,2-j))$;
therefore we only need to consider the cases $\KK(1,j), \KK(i,1)$, $i, j \ge 1$.

For $i, j \ge 3$ we have an implication
\[
\AA(\KK(i,j)) = \emptyset \implies \AA(\KK(i+1,j)) = \emptyset, \; \AA(\KK(i,j+1)) = \emptyset,
\]
therefore it is sufficient to prove that
\be{OOacyclic}\bal
\AA(\KK(1,5)) & \= \emptyset \\
\AA(\KK(4,1)) & \= \emptyset \\
\eal\ee
and to compute $\AA(L)$ for line bundles
\[
\KK(1,1), \KK(1,2), \KK(1,3), \KK(1,4), \KK(2,1), \KK(3,1).
\]
This is done by looking at the terms of the products of the polynomials in (\ref{cohPolyn}).
\end{proof}

\begin{lemma} \label{excColl}
Let $L_1$,$L_2$,$L_3$ be line bundles on $S$.
A sequence
\[
\OO, L_1(\chi_1), L_2(\chi_2), L_3(\chi_3)
\]
forms an exceptional collection if and only if the following conditions hold:
\[\bal
\chi_1 &\in \AA(L_1^*) \\
\chi_2 &\in \AA(L_2^*) \\
\chi_3 &\in \AA(L_3^*) \\
\chi_2 - \chi_1 &\in \AA(L_1 \otimes L_2^*) \\
\chi_3 - \chi_1 &\in \AA(L_1 \otimes L_3^*) \\
\chi_3 - \chi_2 &\in \AA(L_2 \otimes L_3^*). \\
\eal\]


\end{lemma}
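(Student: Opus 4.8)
The plan is to unwind the definition of an exceptional collection and match each required vanishing with one membership condition in an acyclic set; the key starting observation is that every line bundle on $S$ is automatically an exceptional object, since for any line bundle $E$ one has $\operatorname{Ext}^k_S(E,E) = H^k(S,\OO_S)$, equal to $\C$ for $k=0$ and zero for $k=1,2$ because $p_g(S) = q(S) = 0$. Hence $\OO$ and every $L_i(\chi_i)$ is exceptional for any choice of characters, and the content of the lemma lies entirely in the semiorthogonality relations
\[
\operatorname{RHom}_S(E_j, E_i) = 0, \qquad 0 \le i < j \le 3,
\]
where I write $E_0 = \OO$, $\chi_0 = 0$, and $E_k = L_k(\chi_k)$ for $k = 1,2,3$; strictness is not required, so nothing is imposed on $\operatorname{RHom}_S(E_i,E_j)$ for $i<j$.

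Next I would translate each such $\operatorname{RHom}$ into the cohomology of a single line bundle,
\[
\operatorname{RHom}_S\big(L_j(\chi_j),\, L_i(\chi_i)\big) \;=\; H^*\big(S,\ (L_i \otimes L_j^*)(\chi_i - \chi_j)\big),
\]
and use the identification $Pic(S) = Pic^G(T)$ (Proposition \ref{lineBundles}(1)) together with $H^*(S, M(\eta)) = (H^*(T,M)\otimes\eta)^G$ for a $G$-linearized line bundle $M$ on $T$ and $\eta \in \widehat{G}$. This group vanishes exactly when $-\eta$ does not occur among the characters of $H^*(T,M)$, i.e. when $(L_i\otimes L_j^*)(\chi_i-\chi_j)$ is acyclic, i.e., by the definition of the acyclic set, when $\chi_j - \chi_i \in \AA(L_i\otimes L_j^*)$.

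Finally I would run through the six pairs $(i,j)$ with $0 \le i < j \le 3$. For $i = 0$ we have $L_0 \otimes L_j^* = L_j^*$ and $\chi_0 = 0$, so the condition becomes $\chi_j \in \AA(L_j^*)$, $j = 1,2,3$; for $1 \le i < j \le 3$ it becomes $\chi_j - \chi_i \in \AA(L_i \otimes L_j^*)$. These are precisely the six conditions in the statement, which gives the asserted equivalence.

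I expect no real obstacle: the argument is bookkeeping, and the only point demanding care is keeping the direction of the characters straight — in particular that the convention that $L(\chi)$ is acyclic precisely when $-\chi \in \AA(L)$ forces the appearance of $\chi_j - \chi_i$ (i.e. $-\eta$) rather than $\chi_i - \chi_j$. If in addition one wants the relevant acyclic sets in the explicit shape of Lemma \ref{acyclicSets}, one combines the above with the multiplicativity of $[H^*(T,-)]$ under tensor products (the K\"unneth formula) to read off $\AA(L_i \otimes L_j^*)$ from the polynomials of the factors; but that computation is separate from the equivalence asserted here.
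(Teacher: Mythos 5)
Your proof is correct and is exactly the unwinding that the paper leaves implicit: its entire proof reads ``the statement is a reformulation of the definition of exceptional collection.'' Your sign bookkeeping (that vanishing of $\operatorname{Ext}^*(E_j,E_i)$ for $j>i$ amounts to $\chi_j-\chi_i\in\AA(L_i\otimes L_j^*)$, consistent with the convention that $L(\chi)$ is acyclic iff $-\chi\in\AA(L)$) and the observation that exceptionality of each line bundle is automatic from $p_g=q=0$ are both right.
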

\begin{proof}
The statement is a reformulation of the definition of exceptional collection. 
\end{proof}

\begin{theorem}\label{collections} The following list contains all exceptional
collections of length $4$ consisting of line bundles on $S$
(up to a common twist by a line bundle):
\be{coll}\bal
(I_1) & \; \OO, \; \KK( -1 , 0 ) , \; \KK( 0 , -1 ) , \; \KK( -1 , -1 )  \\
(IV_1) & \;\OO, \; \KK( 1 , -1 ) ,  \; \KK( 0 , -1 ) , \; \KK( -1 , -2 )  \\
(I_{-1}) & \; \OO, \; \KK( -1 , 0 ) ,  \; \KK( -2 , -1 ) , \; \KK( -3 , -1 )  \\
(IV_{-1}) & \; \OO, \; \KK( -1 , -1 ) , \; \KK( -2 , -1 ) , \; \KK( -1 , -2 ) \\
(II_0=IV_0) & \; \OO, \; \KK( 0 , -1 ) ,  \; \KK( -1 , -1 ), \; \KK( -1 , -2 ) \\
(I_0) & \; \OO, \; \KK( -1 , 0 ) ,  \; \KK( -1 , -1 ) , \; \KK( -2 , -1 ).  \\
\eal\ee
These six collections are spires of the two helices 
\be{H12}\bal
(\HH_1) \; & I_1 \to IV_1 \to I_{-1} \to IV_{-1} \to I_1 \\
(\HH_2) \; & I_0 \to II_0 \to I_0. \\
\eal\ee
\end{theorem}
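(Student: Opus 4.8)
The strategy is to combine the purely numerical classification of Lemma~\ref{numExceptional}, which leaves only the four one-parameter families $I_c, II_c, III_c, IV_c$, with the arithmetic constraints of Lemma~\ref{excColl}, which say exactly when a chosen collection of torsion twists $\chi_1,\chi_2,\chi_3$ actually produces an exceptional (not merely numerically exceptional) collection. Since any exceptional collection of four line bundles can, after a common twist, be assumed to start with $\OO$, and since the starting collection then determines the numerical type, it suffices to go through the four families one at a time, and for each value of the integer parameter $c$ to ask whether the three membership conditions and three difference conditions of Lemma~\ref{excColl} can be satisfied simultaneously by some $(\chi_1,\chi_2,\chi_3) \in \widehat G^3$.

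First I would reduce the range of $c$ to a finite set. The key point is that each of the six conditions in Lemma~\ref{excColl} requires a certain acyclic set $\AA(L)$ to be \emph{nonempty}, and Lemma~\ref{acyclicSets} tells us precisely which line bundles $\KK(i,j)$ (equivalently $\OO(i,j)$, since the numerical type is twist-independent) have nonempty acyclic set: only those listed, all with $|i| \le 3$, $|j| \le 4$ or their Serre duals, and in particular one of $i,j$ forced to lie near $1$. Applying this to the entries $L_1, L_2, L_3$ of each family — e.g.\ in type $I_c$ the bundles are $\OO(-1,0), \OO(c-1,-1), \OO(c-2,-1)$, whose duals must all have nonempty acyclic set — pins $c$ down to finitely many values; a quick inspection shows only $c \in \{-1,0,1\}$ survive in families $I$ and $IV$, and only $c = 0$ in families $II$ and $III$ (using $I_0 = III_0$, $II_0 = IV_0$). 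This is the step where Lemma~\ref{acyclicSets} does the heavy lifting.

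Next, for each surviving $(\text{family}, c)$ I would carry out the finite search for $(\chi_1,\chi_2,\chi_3)$. Each $\chi_k$ must lie in a small explicitly-listed set (at most $7$ characters), and the three difference conditions cut this down further; in practice one solves for $\chi_1$ first from $\AA(L_1^*)$, then tests each candidate against $\chi_1 \in \AA(L_1 \otimes L_2^*) + \chi_2$ with $\chi_2 \in \AA(L_2^*)$, and so on — a bounded computation over $(\Z/5)^2$. This produces, for $I_1$, the twists making $\OO, \KK(-1,0), \KK(0,-1), \KK(-1,-1)$ exceptional, and similarly the other five collections in~(\ref{coll}); conversely, the search shows these are the \emph{only} solutions, so no other twist of any surviving numerical type works. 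I should also record that each of the six listed collections genuinely satisfies all six conditions — a direct check against the tables.

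Finally, the helix assertion~(\ref{H12}) follows by invoking Lemma~\ref{helix}: the numerically exceptional helices through types $I_c$ and $IV_c$ are $I_c \to IV_c \to I_{-c} \to IV_{-c} \to I_c$, and through $II_c, III_c$ are $II_c \to III_c \to II_{-c} \to III_{-c} \to II_c$; specializing to $c = 1$ gives $\HH_1$ (whose four spires are exactly the $I_1, IV_1, I_{-1}, IV_{-1}$ collections found above), and to $c = 0$ gives $\HH_2 = I_0 \to II_0 \to I_0$ (using $I_0 = III_0$ and $II_0 = IV_0$, so the period-$4$ helix collapses to the two distinct spires $I_0$ and $II_0$). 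One must check that the twists produced by the character search are compatible with the Serre-functor twist $\otimes\,\omega_S = \otimes\,\KK(2,2)$ relating consecutive spires — i.e.\ that the solution sets for $IV_1$ really are obtained from those for $I_1$ by the helix shift — but this is automatic once we know both spires are exceptional and lie in a common helix, since a spire of a helix is exceptional iff any one spire is. The main obstacle is organizational rather than conceptual: keeping the bookkeeping of characters and their differences straight across the $\approx 10$ cases, since an arithmetic slip in reading off $\AA(L^*) = -\overline{\AA(L)}$ or in a difference would corrupt the list; I would mitigate this by exploiting the Serre-duality symmetry $\AA(\KK(i,j)) \leftrightarrow \AA(\KK(2-i,2-j))$ to roughly halve the work.
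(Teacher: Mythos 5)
Your overall architecture is the same as the paper's: classify numerically exceptional collections into the families $I_c$--$IV_c$ (Lemma \ref{numExceptional}), use the helix structure (Lemma \ref{helix}) to cut down the list of representatives, use nonemptiness of the acyclic sets from Lemma \ref{acyclicSets} to bound $c$, and then run the finite character search of Lemma \ref{excColl} over $(\Z/5)^2$. However, there is a concrete error in your case reduction. You claim that the nonemptiness filter leaves ``only $c=0$ in families $II$ and $III$.'' This is false. For type $II_c = (\OO,\,\OO(0,-1),\,\OO(-1,c-1),\,\OO(-1,c-2))$ the relevant bundles are $L_1^*=\OO(0,1)$, $L_2^*=\OO(1,1-c)$, $L_3^*=\OO(1,2-c)$, $L_1\otimes L_2^*=\OO(1,-c)$, $L_1\otimes L_3^*=\OO(1,1-c)$ and $L_2\otimes L_3^*=\OO(0,1)$; since $\AA(\KK(1,j))\neq\emptyset$ exactly for $-2\le j\le 4$, every one of the six acyclic sets is nonempty for all $c\in\{-2,-1,0,1,2\}$. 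In particular $II_1$ and $II_2$ (and their helix partners $II_{-1}, II_{-2}, III_{\pm1}, III_{\pm2}$) pass your filter and cannot be discarded on numerical or nonemptiness grounds.

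Consequently, following your plan literally you would never run the character search on $II_1$ and $II_2$, and the proof would be incomplete: these two types genuinely require the explicit search, which is where they die. (For $II_1$ one needs $\chi_1\in\AA(\KK(0,1))$, $\chi_2\in\AA(\KK(1,0))$ and $\chi_2-\chi_1\in\AA(\KK(1,-1))=\{[0,3],[2,0],[3,2]\}$, and a direct check of the $4\times 7$ pairs shows no difference lands in that three-element set; similarly for $II_2$ using $\AA(\KK(1,-2))=\{[0,0]\}$.) The paper devotes two of its four case analyses precisely to ruling out $II_1$ and $II_2$. Your conclusion happens to be unaffected because these cases do fail, but the argument as written has a gap at exactly the step that distinguishes the $I/IV$ helices (which lift) from the nontrivial $II/III$ helices (which do not). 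The rest of your outline --- the reduction of $c$ to $\{-1,0,1\}$ in families $I/IV$, the search producing only trivial characters, and the assembly into the two helices $\HH_1$, $\HH_2$ using $I_0=III_0$, $II_0=IV_0$ --- matches the paper's proof.
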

\begin{proof}
Because of Remark \ref{helix} we only need to consider numerically exceptional collections
of types $I_c, c \ge 0$, $II_c, c > 0$ and all helices formed by them.
Let us start by listing all numerically exceptional collections 
\[
\OO, L_1, L_2, L_3 = L_1 \otimes L_2
\]
of line bundles of the types as above satisfying the properties:
\[
\AA(L_1^*) \ne \emptyset; \; \AA(L_2^*) \ne \emptyset; \; \AA(L_3^*) \ne \emptyset
\]
\[
\AA(L_1 \otimes L_2^*) \ne \emptyset.
\]
By Lemma \ref{acyclicSets} these properties are necessary for $\OO, L_1, L_2, L_3$ to form an exceptional collection.
With the help of Lemmas \ref{numExceptional} and \ref{acyclicSets} we get the following list:
\[
I_0, I_1, II_1, II_2. 
\]

Finally we check whether there are characters $\chi_1, \chi_2, \chi_3$ for each
of these types of collections that will satisfy the conditions of Lemma \ref{excColl}.

{\bf Type $I_0$}: $\OO, \KK(-1,0)(\chi_1), \KK(-1,-1)(\chi_2), \KK(-2,-1)(\chi_3)$ with conditions
\[\bal
\chi_1, \chi_3 - \chi_2 & \in \AA(\KK(1,0)) = \{ [0, 0], [0, 1], [0, 2], [1, 4], [2, 3], [3, 0], [4, 0] \}\\  
\chi_2, \chi_3 - \chi_1 & \in \AA(\KK(1,1)) = \{ [0, 0], [1, 2], [2, 1], [2, 2], [3, 3], [3, 4], [4, 3] \} \\
\chi_3 & \in \AA(\KK(2,1)) = \{ [0, 0], [1, 2], [2, 1], [2, 2] \} \\
\chi_2 - \chi_1 & \in \AA(\KK(0,1)) =  \{ [0, 0], [3, 3], [3, 4], [4, 3] \} \\
\eal\]

For each choice of $\chi_3$ we find possible $\chi_1, \chi_2$ from conditions
\be{cond1}\bal
\chi_1 & \in \AA(\KK(1,0)) \cap \chi_3 - \AA(\KK(1,1)) \\
\chi_2 & \in \AA(\KK(1,1)) \cap \chi_3 - \AA(\KK(1,0)) \\
\eal\ee
and look for those $\chi_1, \chi_2$ that satisfy
\be{cond2}
\chi_2 - \chi_1 \in \AA(\KK(0,1)). 
\ee

1. $\chi_3 = [0,0]$.
Using (\ref{cond1}) we find the only set of characters
\[
\chi_1 = \chi_2 = [0,0]
\]
and it obviously satisfies the condition (\ref{cond2}) as well.
Thus we obtain the collection
\[
(I_0) \; \OO, \; \KK(-1,0),  \; \KK(-1,-1), \; \KK(-2,-1) \\
\]
and the one in the same helix
\[
(II_0=IV_0) \; \OO, \; \KK( 0 , -1 ) ,  \; \KK( -1 , -1 ), \; \KK( -1 , -2 ). \\
\]

2. $\chi_3 = [1,2]$.
(\ref{cond1}) reads as:
\[\bal
\chi_1 & \in \{ [0,0], [4,0] \} \\
\chi_2 & \in \{ [1,2], [2,2] \} \\
\eal\]
and none of these pairs satisfies (\ref{cond2}).

3. $\chi_3 = [2,1]$.
(\ref{cond1}) reads as:
\[\bal
\chi_1 & \in \{ [0,0], [1,4] \} \\
\chi_2 & \in \{ [1,2], [2,1] \} \\
\eal\]
and none of these pairs satisfies (\ref{cond2}).

4. $\chi_3 = [2,2]$
(\ref{cond1}) reads as:
\[\bal
\chi_1 & \in \{ [0,1], [0,0] \} \\
\chi_2 & \in \{ [2,1], [2,2] \} \\
\eal\]
and none of these pairs satisfies (\ref{cond2}).

\medskip

{\bf Type $I_1$}: $\OO, \KK(-1,0)(\chi_1), \KK(0,-1)(\chi_2), \KK(-1,-1)(\chi_3)$ with conditions
\[\bal
\chi_1, \chi_3 - \chi_2 & \in \AA(\KK(1,0)) = \{ [0, 0], [0, 1], [0, 2], [1, 4], [2, 3], [3, 0], [4, 0] \}\\  
\chi_2, \chi_3 - \chi_1 & \in \AA(\KK(0,1)) = \{ [0, 0], [3, 3], [3, 4], [4, 3] \} \\
\chi_3 & \in \AA(\KK(1,1)) =  \{ [0, 0], [1, 2], [2, 1], [2, 2], [3, 3], [3, 4], [4, 3] \}\\
\chi_2 - \chi_1 & \in \AA(\KK(-1,1)) = \{ [0,0] \}\\
\eal\]

From the conditions on $\chi_1, \chi_2$ we find that $\chi_1 = \chi_2 = [0,0]$. Then
\[
\chi_3 \in \AA(\KK(1,1)) \cap \AA(\KK(1,0)) \cap \AA(\KK(0,1)) = \{ [0,0] \}.
\]

This way we get exceptional collection
\[\bal
(I_1) & \; \OO, \; \KK(-1,0), \; \KK(0,-1) , \; \KK(-1,-1)  \\
\eal\]
and three others lying in the same helix
\[\bal
(IV_1) & \;\OO, \; \KK( 1 , -1 ) ,  \; \KK( 0 , -1 ) , \; \KK( -1 , -2 )  \\
(I_{-1}) & \; \OO, \; \KK( -1 , 0 ) ,  \; \KK( -2 , -1 ) , \; \KK( -3 , -1 )  \\
(IV_{-1}) & \; \OO, \; \KK( -1 , -1 ) , \; \KK( -2 , -1 ) , \; \KK( -1 , -2 ). \\
\eal\]

\medskip

{\bf Type $II_1$}: $\OO, \KK(0,-1)(\chi_1), \KK(-1,0)(\chi_2), \KK(-1,-1)(\chi_3)$ with conditions
\[\bal
\chi_1, \chi_3 - \chi_2 & \in \AA(\KK(0,1)) = \{  [0, 0], [3, 3], [3, 4], [4, 3] \}\\  
\chi_2, \chi_3 - \chi_1 & \in \AA(\KK(1,0)) = \{ [0, 0], [0, 1], [0, 2], [1, 4], [2, 3], [3, 0], [4, 0] \} \\
\chi_3 & \in \AA(\KK(1,1)) =  \{ [0, 0], [1, 2], [2, 1], [2, 2], [3, 3], [3, 4], [4, 3] \}\\
\chi_2 - \chi_1 & \in \AA(\KK(1,-1)) = \{ [0, 3], [2, 0], [3, 2] \}\\
\eal\]

There exist no $\chi_1$, $\chi_2$ satisfying the respective conditions.

\medskip

{\bf Type $II_2$}: $\OO, \OO(0,-1)(\chi_1), \OO(-1,1)(\chi_2), \OO(-1,0)(\chi_3)$ with conditions
\[\bal
\chi_1, \chi_3 - \chi_2 & \in \AA(\KK(0,1)) = \{  [0, 0], [3, 3], [3, 4], [4, 3] \} \\   
\chi_2, \chi_3 - \chi_1 & \in \AA(\KK(1,-1)) = \{ [0, 3], [2, 0], [3, 2] \}\\
\chi_3 & \in \AA(\KK(1,0)) =  \{ [0, 0], [0, 1], [0, 2], [1, 4], [2, 3], [3, 0], [4, 0] \}\\
\chi_2 - \chi_1 & \in \AA(\KK(1,-2)) = \{ [0, 0] \}\\
\eal\]

There exist no $\chi_1$, $\chi_2$ satisfying the respective conditions.

\end{proof}

\begin{remark}\label{canLattice} All six exceptional collections in (\ref{coll})
span the same torsion-free subgroup in $Pic(S)$ with generators
\[\bal
\KK(1,0) &\= \OO(1,0)[3,3], \\
\KK(0,1) &\= \OO(0,1)[3,2].
\eal\]
We do not have a conceptual proof for this statement.
\end{remark}

For a helix $E_\bullet$ of period $n$ we introduce a matrix $\MM(E_\bullet)$ with entries 
consisting of the $Ext$-groups in the spires of $E_\bullet$:
\[
M_{i,j} = \sum_l \dim Ext^l(E_i, E_{i+j}) \cdot q^l ;\ 0 \le i,j \le n-1. 
\]

\begin{proposition}\label{algebras}
For the helices (\ref{H12}) we have:
\[\bal
\MM(\HH_1) & \= 
\begin{pmatrix} 
1 &  3q^2 + q & 3q^2 + q & 4q^2 \\
1 &  3q^2 + 3q & 3q^2 + q & 6q^2 \\
1 &  3q^2 + q & 6q^2 & 8q^2 \\
1 & 4q^2 & 6q^2 & 6q^2 \\
\end{pmatrix} \\
\MM(\HH_2) & \= 
\begin{pmatrix} 
1 &  3q^2 + q & 4q^2 & 6q^2 \\
1 &  3q^2 + q & 4q^2 & 6q^2 \\
1 &  3q^2 + q & 4q^2 & 6q^2 \\
1 &  3q^2 + q & 4q^2 & 6q^2 \\
\end{pmatrix} \\
\eal\]
In particular we see that all our collections have endomorphism $dg$-algebras with non-vanishing
first and second cohomology groups.
\end{proposition}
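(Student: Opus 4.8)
The plan is to compute each entry $M_{i,j} = \sum_l \dim \mathrm{Ext}^l(E_i, E_{i+j}) \cdot q^l$ directly, reducing everything to the cohomology data already tabulated. For a pair of line bundles $E_i = \KK(a,b)(\chi)$ and $E_{i+j} = \KK(a',b')(\chi')$ on $S$ we have $\mathrm{Ext}^l(E_i, E_{i+j}) = H^l(S, \KK(a'-a, b'-b)(\chi'-\chi))$, and since by the equivariant identification $H^l(S, L) = H^l(T, \pi^* L)^G$, this is the multiplicity of the trivial character in $[H^*(T, \KK(a'-a,b'-b))] \cdot x^{\cdots} y^{\cdots}$ (the shift coming from $\chi' - \chi$ together with the character part of $\KK(a'-a,b'-b)$ versus $\OO(a'-a,b'-b)$). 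Concretely, since all six collections live in the $\KK$-lattice with no extra twist (Remark \ref{canLattice}), every needed group is $H^l(S, \KK(i,j))$ for small $(i,j)$, i.e.\ an entry of the table in Lemma \ref{cohKK}. So the first step is simply to read off, for each helix, the relevant differences $(i,j) = (a'-a, b'-b)$ as $i$ runs over a spire and $j$ over the offsets $0,1,2,3$, remembering that the diagonal entries $M_{i,0} = 1$ record $\mathrm{Hom}(E_i,E_i) = \C$.

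For $\HH_2$ (the helix $I_0 \to II_0 \to I_0$) the four rows are forced to be equal: the spire $\EE_{a+1}$ differs from $\EE_a$ only by a global twist, which does not change any $\mathrm{Ext}$-group, and the helix has period $2$ in the sense that $I_0$ and $II_0$ alternate, so each row is the list of $\mathrm{Ext}^\bullet$'s along a single spire; one then checks using Lemma \ref{cohKK} that for the spire $\OO, \KK(-1,0), \KK(-1,-1), \KK(-2,-1)$ one gets the differences $(i,j)$ yielding $1,\ 3q^2+q,\ 4q^2,\ 6q^2$ respectively — e.g.\ $\KK(-1,-1)$ against $\OO$ gives $H^*(S,\KK(-1,-1))$, and $\KK(-2,-1)$ against $\KK(-1,0)$ gives $H^*(S,\KK(-1,-1))$ again, explaining why the third column $4q^2$ and the $6q^2$ in the last column recur. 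For $\HH_1$ one does the same over the four spires $I_1, IV_1, I_{-1}, IV_{-1}$, extracting sixteen entries from the table; the appearance of $3q^2+3q$ in position $(1,1)$ comes from $H^*(S,\KK(0,-1))$ (equivalently $H^*(S,\KK(2,3))$ by Serre duality, which the table lists as $3+3q$), and one must be careful here that Serre duality on $S$ reads $h^p(\KK(i,j)) = h^{2-p}(\KK(2-i,2-j))$ so that an $H^1$ and an $H^2$ get correctly matched to table entries.

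The only genuine subtlety — and the place I'd be most careful — is the bookkeeping of torsion characters. One must verify that in each of the six collections the successive quotients $E_{i+1}\otimes E_i^*$ really do land in $\widehat G \cdot [\OO] \oplus \Z\KK(1,0)\oplus\Z\KK(0,1)$ with trivial character part once written in the $\KK$-basis, i.e.\ that no hidden nontrivial $\chi$ creeps in so that $\mathrm{Ext}^\bullet(E_i,E_{i+j})$ would be the multiplicity of a \emph{nontrivial} character in some $[H^*(T,\cdot)]$ rather than of $[0,0]$. This is exactly Remark \ref{canLattice}, which the authors concede they prove only by the case-by-case computation in Theorem \ref{collections}; granting that remark, every $\mathrm{Ext}$-group needed is literally $H^*(S,\KK(i,j))$ with $(i,j)$ in the range of the Lemma \ref{cohKK} table, so no further cohomology computation is required. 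Finally, once both matrices are assembled, the last sentence is immediate: the $(0,1)$ entry of each matrix is $3q^2+q$, which has a nonzero coefficient of $q^1$, and every off-diagonal entry has a nonzero coefficient of $q^2$, so the endomorphism $dg$-algebra $\bigoplus_{i,j}\mathrm{RHom}(E_i,E_j)$ of each spire has $H^1\neq 0$ and $H^2\neq 0$.
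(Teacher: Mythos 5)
Your approach is the same as the paper's: the paper's entire proof is the single sentence that the entries are read off from the table in Lemma \ref{cohKK}, and your reduction $Ext^l(E_i,E_{i+j}) = H^l(S, E_{i+j}\otimes E_i^{*})$, together with the observation (Remark \ref{canLattice}) that all successive differences lie in the $\KK(i,j)$-lattice with trivial character part, is exactly the intended justification. One correction to your worked example: the entry $3q^2+3q$ at position $(1,1)$ of $\MM(\HH_1)$ is $Ext^{*}(\KK(-1,0),\KK(0,-1)) = H^*(S,\KK(1,-1))$, not $H^*(S,\KK(0,-1))$ (which equals $3q^2+q$); its Serre dual is $\KK(1,3)$, whose table entry is $3+3q$, whereas $\KK(2,3)$ gives $3+q$. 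The rest of the bookkeeping, and the final deduction about nonvanishing of $H^1$ and $H^2$ of the endomorphism $dg$-algebras, is correct.
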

\begin{proof}
The entries are found in the table given in Lemma \ref{cohKK}.
\end{proof}

\begin{proposition}
The $A_{\infty}$-algebra of the exceptional collection 
\[
(I_{-1}) \; \OO, \; \KK( -1 , 0 ) ,  \; \KK( -2 , -1 ), \; \KK( -3 , -1 ) 
\]
is formal and moreover the usual product $m_2$ is trivial.
\end{proposition}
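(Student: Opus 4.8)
The $A_{\infty}$-algebra in question is the one carried by the total $Ext$-algebra
$A = \bigoplus_{0 \le i \le j \le 3} Ext^*(F_i,F_j)$ of the collection, where I abbreviate $F_0 = \OO$, $F_1 = \KK(-1,0)$, $F_2 = \KK(-2,-1)$, $F_3 = \KK(-3,-1)$; by the minimal model theorem it is an $A_{\infty}$-structure on $A$ with $m_1 = 0$, with $m_2$ the Yoneda composition, and with higher operations $m_n$, $n\ge 3$, and it is $A_{\infty}$-quasi-isomorphic to the endomorphism $dg$-algebra of $F_0\oplus F_1\oplus F_2\oplus F_3$. Formality together with triviality of the product amounts to showing that in this minimal model $m_2$ vanishes on the "non-scalar" part $\bigoplus_{i<j} Ext^*(F_i,F_j)$ and that $m_n = 0$ for all $n\ge 3$; the minimal model is then literally $A$ with its (trivial) Yoneda product and no higher terms. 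The plan is to read off from Proposition~\ref{algebras} the cohomological degrees in which the $Ext$-groups of this particular collection live and to observe that, for purely numerical reasons, there is no room for any of these operations.

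First I would record, from the row of $\MM(\HH_1)$ corresponding to $I_{-1}$, that the only nonzero groups between distinct objects are $Ext^1(F_0,F_1) = Ext^1(F_2,F_3) = \C$ together with $Ext^2(F_0,F_1) = Ext^2(F_2,F_3) = \C^3$, $Ext^2(F_1,F_2) = \C^4$, $Ext^2(F_0,F_2) = Ext^2(F_1,F_3) = \C^6$ and $Ext^2(F_0,F_3) = \C^8$. In particular $Ext^0(F_i,F_j) = 0$ and $Ext^{\ge 3}(F_i,F_j) = 0$ for $i<j$, every "long" $Ext$-group sits in degree $2$, and — the decisive point — $Ext^1(F_1,F_2) = 0$.

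For $m_2$: since the collection is exceptional, the only components of $m_2$ beyond the action of the idempotents $\mathrm{id}_{F_i}$ are Yoneda compositions $Ext^a(F_i,F_j)\otimes Ext^b(F_j,F_k)\to Ext^{a+b}(F_i,F_k)$ with $i<j<k$; as the target lies in degree $2$ and $Ext^0$ between distinct objects vanishes, a nonzero composition would force $a = b = 1$, but for each of the four triples $(0,1,2),(0,1,3),(0,2,3),(1,2,3)$ one of the two factors is zero (it is always $Ext^1(F_1,F_2)$, $Ext^1(F_0,F_2)$ or $Ext^1(F_1,F_3)$). For the higher operations: with only four objects an increasing chain $F_{i_0}\to\cdots\to F_{i_n}$ forces $n\le 3$, and unitality of the minimal structure kills every $m_n$ having an identity morphism as an input, so $m_n = 0$ for $n\ge 4$; the single candidate is $m_3 : Ext^{a_1}(F_0,F_1)\otimes Ext^{a_2}(F_1,F_2)\otimes Ext^{a_3}(F_2,F_3)\to Ext^{a_1+a_2+a_3-1}(F_0,F_3)$, where $Ext^*(F_1,F_2)$ is concentrated in degree $2$ so $a_2 = 2$, while $a_1,a_3\ge 1$, whence the output degree is $\ge 3$ and $Ext^{\ge 3}(F_0,F_3) = 0$. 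Hence $m_3 = 0$ as well, so the minimal model equals its own cohomology algebra with trivial multiplication, which gives both formality and triviality of $m_2$.

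I do not expect a genuine obstacle: the whole argument is a degree count fed by Proposition~\ref{algebras}, the two decisive inputs being the vanishing $Ext^1(F_1,F_2) = 0$ and the fact that every morphism between distinct objects not killed by it sits in the single degree $2$. The only point needing care is the justification that no operation other than those listed can contribute, which is exactly the triangularity of an exceptional collection combined with the smallness of the collection; this is also what makes $I_{-1}$ more rigid than, e.g., $I_1$, where both $Ext^1$-groups along the subchain $\OO\to\KK(-1,0)\to\KK(0,-1)$ are nonzero (again visible in $\MM(\HH_1)$) and a nonzero $m_2$ is a priori possible.
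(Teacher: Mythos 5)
Your argument is correct and follows essentially the same route as the paper's proof: read the $Ext$-degrees off the matrix $\MM(\HH_1)$, reduce to a strictly unital minimal model so that $m_n$ for $n\ge 4$ dies because there are only four vertices, kill $m_3$ by the degree count forced by $Ext^1(F_1,F_2)=0$, and kill $m_2$ because the only two nontrivial degree-one classes ($Ext^1(F_0,F_1)$ and $Ext^1(F_2,F_3)$) are not composable while any product involving a degree-two class lands in degree $\ge 3 = 0$. You have merely spelled out the ``grading reasons'' that the paper leaves implicit; no gap.
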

\begin{proof}
The $Ext$-groups of the collection $E_\bullet = I_{-1}$ are 
all found in $\MM(\HH_1)$ from the previous Proposition.
In fact we have:
\[\Bigl(\sum_l \dim Ext^l(E_i, E_j) \cdot q^l\Bigr)_{i,j} = 
\begin{pmatrix} 
1 &  3q^2 + q & 6q^2 & 8q^2 \\
0 & 1 &  4q^2 & 6q^2 \\
0 & 0 & 1 &  3q^2 + q\\
0 & 0 & 0 & 1\\
\end{pmatrix}. \\
\]

In order to prove formality we check that the higher $A_\infty$-operations $m_k, k \ge 3$
of the collection $E_\bullet$ vanish.

Using a standard argument (see [Sei08] Lemma 2.1 or [Lef02] Th 3.2.1.1),
we may assume that $m_l (\dots, id_{E_i} , \dots) = 0$ for all objects $E_i$
and all $l > 2$.
Now the third $A_\infty$-operation $m_3$
vanishes for grading reasons and the products $m_k, k \geq 4$ also vanish
since our graded quiver has only $4$ vertices.

The product of the two non-trivial elements of degree 1 vanishes since
these elements are not composable. All other products are trivial for
grading reasons.

\end{proof}

\medskip

Let $(E_\bullet)$ be one of the collections in (\ref{coll}).
By \cite{BK}, Theorem 2.10 
the subcategory $\left<E_0,E_1,E_2,E_3\right>$ generated by the collection is admissible and 
has a right orthogonal $\AA$, i.e. there is a semiorthogonal decomposition
$D^b_{coh}(S) = \left<E_0,E_1,E_2,E_3,\AA\right>$.

\begin{proposition}
Right orthogonals to two spires of a helix are equivalent categories.
\end{proposition}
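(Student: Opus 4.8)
The plan is to exploit the fact that consecutive spires of a helix differ only by mutations, together with the standard fact that mutation functors are equivalences. Let $E_\bullet$ be a helix of period $n$ on $S$, and let $\EE_a = \{E_a, \dots, E_{a+n-1}\}$ and $\EE_{a+1} = \{E_{a+1}, \dots, E_{a+n}\}$ be two adjacent spires. Since the right orthogonal category $\AA$ sits in a semiorthogonal decomposition $D^b_{coh}(S) = \langle E_a, \dots, E_{a+n-1}, \AA_a \rangle$, the category $\AA_a$ is determined by the \emph{subcategory} $\langle E_a, \dots, E_{a+n-1}\rangle$ generated by the spire, not by the collection itself or its ordering. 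Therefore it suffices to show that two adjacent spires generate the same triangulated subcategory of $D^b_{coh}(S)$, i.e.\ $\langle E_a, \dots, E_{a+n-1}\rangle = \langle E_{a+1}, \dots, E_{a+n}\rangle$ as full triangulated subcategories; then the right orthogonals automatically coincide on the nose. Induction then gives the statement for arbitrary (non-adjacent) spires of the same helix.

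First I would recall that for an exceptional collection, mutating $E_a$ to the right past $E_{a+1}, \dots, E_{a+n-1}$ replaces it by the object $R_{E_{a+1}} \cdots R_{E_{a+n-1}} E_a$, and the key point from the theory of helices (see \cite{Bon}, \cite{BP}) is that this right-mutated object is precisely $E_{a+n} = E_a \otimes \omega_S^{\otimes(-1)}$ up to shift — this is exactly the Serre-functor characterization of a helix that is built into our definition in Section 3.1. Hence the collection $E_{a+1}, \dots, E_{a+n-1}, E_{a+n}$ is obtained from $E_a, \dots, E_{a+n-1}$ by a single right mutation, and in particular the two collections generate the same subcategory of $D^b_{coh}(S)$ because a mutated collection always generates the same subcategory as the original one (the mutation of an object lies in the subcategory generated by the others together with the original object, and vice versa).

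With that observation in hand the proof is short: given two spires $\EE_a$ and $\EE_b$ of $E_\bullet$ with, say, $a \le b$, a finite chain of adjacent-spire moves $\EE_a \to \EE_{a+1} \to \cdots \to \EE_b$ shows that $\langle E_a, \dots, E_{a+n-1}\rangle = \langle E_b, \dots, E_{b+n-1}\rangle$, whence their right orthogonals inside $D^b_{coh}(S)$ are literally the same subcategory, in particular equivalent. If one wants the statement for spires of \emph{different} but mutation-equivalent helices — which is not what is claimed here — one would instead invoke that a mutation functor $D^b_{coh}(S) \to D^b_{coh}(S)$ restricts to an equivalence between the respective right orthogonals, but for spires of a single helix the orthogonal does not even move.

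The only genuine point requiring care — and the step I expect to be the main obstacle to a fully rigorous write-up — is justifying the mutation identity $R_{E_{a+1}}\cdots R_{E_{a+n-1}} E_a \cong E_{a+n}[\text{shift}]$ in our setting, since our collections are \emph{not} full: the Serre functor argument that normally yields this identity for a full exceptional collection must be replaced by the statement that for any exceptional collection (full or not) in a category with Serre functor, the total left mutation of $E_{a}$ through the block $E_{a+1},\dots,E_{a+n-1}$ followed by one more object is controlled by $S_X^{-1} = \otimes\, \omega_X^{-1}[-\dim X]$ applied to $E_a$; this is standard but should be cited carefully (e.g.\ \cite{Bon}, or via \cite{BK}, Theorem 2.10 together with the definition of helix adopted in Section 3.1). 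Once that is granted, everything else is immediate from the definition of a helix as given above and from the elementary fact that mutations preserve the generated subcategory.
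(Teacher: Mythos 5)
The gap is exactly at the point you flagged, and it is not a matter of finding the right citation: for a \emph{non-full} exceptional collection the identity $R_{E_{a+1}}\cdots R_{E_{a+n-1}}E_a \cong E_a\otimes\omega_S^{-1}[\mathrm{shift}]$ is false, and consequently adjacent spires do \emph{not} generate the same subcategory of $D^b_{coh}(S)$. The Serre-functor computation of the extreme mutation requires mutating $E_a$ past the \emph{entire} complementary category, i.e.\ past $\left< E_{a+1},\dots,E_{a+n-1}\right>$ \emph{and} the orthogonal complement; only when that complement vanishes (the full case) does it reduce to mutation past the other members of the spire. In the non-full case the total mutation through $E_{a+1},\dots,E_{a+n-1}$ stays inside $\left< E_a,\dots,E_{a+n-1}\right>$, whereas $E_a\otimes\omega_S^{-1}$ does not lie there. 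Indeed, if your claim $\left< E_a,\dots,E_{a+n-1}\right>=\left< E_{a+1},\dots,E_{a+n}\right>$ held for all $a$ (which your induction needs), then chaining it over one full period would show that $\left< E_0,\dots,E_{n-1}\right>$ is invariant under $-\otimes\omega_S$, i.e.\ under the Serre functor; a Serre-invariant admissible subcategory produces a completely orthogonal decomposition of $D^b_{coh}(S)$, which is impossible for a connected variety unless the subcategory is zero or everything --- and here it is neither, since its orthogonal has $K_0=(\Z/5)^2\neq 0$. Since an admissible subcategory is recovered from its right orthogonal, your conclusion that the orthogonals ``coincide on the nose'' is equivalent to this false statement, so the argument cannot be repaired by a more careful citation.

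The correct argument must produce a genuine equivalence rather than an equality, and this is what the paper does. Set $E_4=E_0\otimes\omega_S^{-1}$, let $\AA$, $\AA'$ be the right orthogonals to the two consecutive spires, and let $\CC$ be the right orthogonal to $\left< E_1,E_2,E_3\right>$. The decomposition $D^b(S)=\left< E_1,E_2,E_3,E_4,\AA'\right>$ gives $\CC=\left< E_4,\AA'\right>$, while $D^b(S)=\left< E_0,E_1,E_2,E_3,\AA\right>$ becomes $\left< E_1,E_2,E_3,\AA,E_4\right>$ after transporting $E_0$ to the other end by the Serre functor; hence $\CC=\left<\AA,E_4\right>$. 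So $\AA$ and $\AA'$ are respectively the left and the right orthogonal of $E_4$ \emph{inside} $\CC$, and the mutation through $E_4$ within $\CC$ is the desired equivalence. Your instinct that mutations do the work is right, but the mutation acts inside $\CC$ and genuinely moves the quasi-phantom; it does not fix it as a subcategory of $D^b_{coh}(S)$.
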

\begin{proof}
By transitivity it is enough to prove the statement for two consecutive spires.
Denote $E_4 = E_0 \otimes \omega_S^{-1}$.
Let $\AA$ be the right orthogonal to $\left< E_0, E_1, E_2, E_3 \right>$,
and $\AA'$ be the right orthogonal to $\left< E_1, E_2, E_3, E_4 \right>$.
We want to show that categories $\AA$ and $\AA'$ are equivalent.
Denote by $\CC$ the right orthogonal to $\left< E_1, E_2, E_3 \right>$.
Second decomposition $D^b(S) = \left< E_1, E_2, E_3, E_4, \AA' \right>$
implies $\CC = \left< E_4, \AA' \right>$.
First decomposition $D^b(S) = \left< E_0, E_1, E_2, E_3, \AA \right>$ is equivalent to
$D^b(S) = \left< E_1, E_2, E_3, \AA, E_4 \right>$
by Serre duality, so $\CC = \left< \AA, E_4 \right>$.
Hence both $\AA$ and $\AA'$ are subcategories in $\CC$ orthogonal to $E_4$:
$\AA$ is the left orthogonal and $\AA'$ is the right orthogonal.
So (left/right) mutations in $E_4$ establish the equivalence between $\AA$ and $\AA'$.
\end{proof}

We denote two equivalence classes of subcategories obtained by taking right orthogonals to $\HH_1$ and $\HH_2$ by $\AA_1$ and $\AA_2$
respectively.
We note that a choice of a spire gives rise to a fully faithful embedding $\AA_i \to D^b(S)$.

\begin{proposition} We have
\[\bal
K_0(\AA_i) &\= (\Z/5)^2 \\
HH_*(\AA_i) &\= 0 \\
HH^0(\AA_i) &\= \C. 
\eal\]
In particular we see that $\AA_i$'s are non-trivial.
\end{proposition}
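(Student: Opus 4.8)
The plan is to compute the three invariants of $\AA_i$ by using the semiorthogonal decomposition $D^b(S) = \left< E_0, E_1, E_2, E_3, \AA_i \right>$ coming from any chosen spire of $\HH_i$, together with the additivity of $K_0$, Hochschild homology, and Hochschild cohomology over semiorthogonal decompositions. For $K_0$ we use the additivity $K_0(D^b(S)) = \bigoplus_{k=0}^{3} K_0(\left< E_k \right>) \oplus K_0(\AA_i)$; since each $E_k$ is an exceptional object, $K_0(\left<E_k\right>) \cong \Z$, so $K_0(\AA_i) = K_0(S) / \Z^4$. Combined with the computation $K_0(S) = \Z^4 \oplus (\Z/5)^2$ established after Lemma \ref{K0lemma}, this gives $K_0(\AA_i) = (\Z/5)^2$ provided we check that the classes $[E_0], \dots, [E_3]$ span a rank-$4$ saturated subgroup mapping isomorphically onto $K_0(S)/\mathrm{tors} = \Z^4$; this follows because the exceptional collection is full modulo the orthogonal and the Euler pairing restricted to the span of the $[E_k]$ is unimodular (the Gram matrix with respect to the dual exceptional basis is upper-triangular with $1$'s on the diagonal).

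For the Hochschild homology, I would invoke additivity of $HH_*$ under semiorthogonal decompositions (Kuznetsov, \cite{Kuz09}): $HH_*(S) = \bigoplus_{k=0}^3 HH_*(\left<E_k\right>) \oplus HH_*(\AA_i)$. Each exceptional object contributes $HH_*(\left<E_k\right>) = HH_*(\mathrm{pt}) = \C$ concentrated in degree $0$, so $\bigoplus_k HH_*(\left<E_k\right>) = \C^4$ in degree $0$. On the other hand, the Hochschild homology of $S$ is computed by the Hochschild–Kostant–Rosenberg isomorphism $HH_*(S) = \bigoplus_{q-p = *} H^p(S, \Omega^q_S)$; since $p_g = q = 0$, the Hodge numbers are $h^{0,0} = h^{2,2} = 1$, $h^{1,1} = 2$, and all others vanish, so $HH_0(S) = \C^{1+2+1} = \C^4$ and $HH_{\pm 1}(S) = HH_{\pm 2}(S) = 0$. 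Subtracting the $\C^4$ contribution of the exceptional collection forces $HH_*(\AA_i) = 0$ in all degrees.

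For $HH^0$, I would use that $HH^0$ of any triangulated category with a (dg/$A_\infty$) enhancement is the center of the category, and more concretely that for an admissible subcategory $\AA_i \subset D^b(S)$ with $D^b(S)$ connected, $HH^0(\AA_i)$ contains $\C$ (the identity); the point is to show it is exactly $\C$. The cleanest route is: $HH^0(\AA_i) = \mathrm{Hom}_{\AA_i}(\mathrm{id}, \mathrm{id})$ and since $\AA_i$ is a \emph{geometric} admissible piece with $HH_*(\AA_i) = 0$ but $K_0(\AA_i) \neq 0$, it is in particular non-zero and indecomposable enough that one checks directly: any natural endomorphism of the identity of $\AA_i$ extends (via the projection functor) compatibly, but the surface $S$ is connected so $HH^0(S) = \C$ (from the table of the Hochschild cohomology lemma, $HH^0(S) = \C$), and one can argue that the restriction $HH^0(S) \to HH^0(\AA_i)$ is surjective because $\AA_i$ is cut out by mutations/orthogonality from the whole category while the $E_k$'s are simple; alternatively, since $\AA_i$ has a full exceptional-free generator and $HH^0$ is a derived-Morita invariant, and a quasi-phantom with $K_0 = (\Z/5)^2$ has connected "spectrum", $HH^0 = \C$.

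The main obstacle is the $HH^0$ claim: additivity of Hochschild \emph{cohomology} under semiorthogonal decompositions is more delicate than that of Hochschild homology (it is not simply a direct sum; there is a spectral sequence / the pieces interact through the gluing bimodule), so I cannot just read off $HH^0(\AA_i)$ by subtraction. I expect the honest argument to be: show $\AA_i$ is \emph{connected} as a dg category — equivalently that its $HH^0$ has no non-trivial idempotents — by using that $K_0(\AA_i) = (\Z/5)^2$ is a non-zero finite group (so $\AA_i \neq 0$) and that a decomposition $\AA_i = \AA_i' \oplus \AA_i''$ would induce a splitting of $K_0$ and of $HH_*$; since $HH_*(\AA_i) = 0$ this is not immediately contradictory, so one instead argues geometrically that $\AA_i$ admits no non-trivial semiorthogonal decomposition coming from its embedding, or more simply computes $HH^0(\AA_i)$ via the formula $HH^0(\AA_i) = \mathrm{HH}^0$ of the $A_\infty$-algebra of a generator and uses that $\AA_i$ is generated by an object $F$ with $\mathrm{Hom}(F,F)$ local (so its degree-$0$ part is $\C$). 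In the interest of keeping the paper self-contained, I would phrase the final step as: the embedding $\AA_i \hookrightarrow D^b(S)$ together with $HH^0(D^b(S)) = HH^0(S) = \C$ and the fact that the complementary piece $\left<E_0,\dots,E_3\right>$ is generated by exceptional objects forces, via the long exact sequence relating the Hochschild cohomologies, $HH^0(\AA_i) = \C$; the "in particular non-trivial" conclusion then follows already from $K_0(\AA_i) = (\Z/5)^2 \neq 0$.
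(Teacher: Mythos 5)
Your computations of $K_0(\AA_i)$ and $HH_*(\AA_i)$ are correct and follow the paper's argument exactly: additivity of $K_0$ and of Hochschild homology over the semiorthogonal decomposition, each exceptional object contributing a $\Z$ (resp.\ a copy of $\C$ in degree $0$), and the HKR computation $HH_*(S)=\C^4$ concentrated in degree $0$ from $p_g=q=0$, $b_2=2$. (Your worry about saturation of the span of the $[E_k]$ is unnecessary: the semiorthogonal decomposition already gives $K_0(S)$ as a direct sum of the $K_0$ of the pieces.)

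The $HH^0$ claim is where your proposal has a genuine gap, and you correctly sense it. None of the routes you sketch closes it. Showing $HH^0(\AA_i)$ has no nontrivial idempotents would only prove it is a connected ring, not that it equals $\C$; there is no reason offered that a generator of $\AA_i$ has endomorphism algebra with degree-zero part $\C$; and the ``long exact sequence relating the Hochschild cohomologies'' of the pieces of a semiorthogonal decomposition does not exist in the naive form you invoke --- the pieces interact through gluing bimodules, which is exactly why $HH^\bullet$ is not additive. The missing idea is Kuznetsov's theorem on heights of exceptional collections \cite{Kuz12}: if $h(E_\bullet)$ denotes the anticanonical height of the helix, defined as the minimum over chains $a_0<a_1<\dots<a_k=a_0+n$ of $\sum_i e(E_{a_{i-1}},E_{a_i})+1-k$ with $e(F,F')=\min\{p: Ext^p(F,F')\neq 0\}$, then the restriction map $HH^k(S)\to HH^k(\AA)$ is an isomorphism for $k\le h(E_\bullet)+\dim S-2$. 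Reading the minimal Ext-degrees off the matrices $\MM(\HH_1)$, $\MM(\HH_2)$ (all off-diagonal entries are concentrated in degrees $1$ and $2$) gives $h(\HH_1)=2$ and $h(\HH_2)=1$, so the bound covers $k=0$ and yields $HH^0(\AA_i)=HH^0(S)=\C$. Without this (or an equivalent analysis of the normal Hochschild cohomology of the gluing data) the $HH^0$ statement is not established; the final remark that $\AA_i\neq 0$ does follow already from $K_0(\AA_i)=(\Z/5)^2\neq 0$, as you say.
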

\begin{proof}
We have
\[
\Z^4 \oplus (\Z/5)^2 = K_0(S) = K_0(D^b(S)) = K_0(\left<E_0, E_1, E_2, E_4\right>) \oplus K_0(\AA_i) = \Z^4 \oplus K_0(\AA_i),
\]
thus
\[
K_0(\AA_i) \cong (\Z/5)^2.
\]

For the homology we use the additivity theorem \cite{Ke} (see also\cite{Kuz09}, Corollary 7.5):
\[
\C^4 = H^*(S) = HH_*(D^b(S)) = HH_*(\left<E_0, E_1, E_2, E_4\right>) \oplus HH_*(\AA_i) = \C^4 \oplus HH_*(\AA_i).
\]

The statement about Hochschild cohomology is proved by the following approach of Kuznetsov \cite{Kuz12}.
Define
\[ e(F,F') = \min \{ p \; | \; Ext^p(F,F') \neq 0 \} \]
For any increasing sequence $ a_0 < a_1< \dots < a_k = a_0 + n $
($n$ is the period of the helix $E_{\bullet}$, in
our case $n=4$)
define
\[ \delta_{a_\bullet}(E_\bullet) = e(E_{a_0},E_{a_1}) + \dots +
e(E_{a_{k-1}},E_{a_k}) + 1-k. \]

Finally the anticanonical height of the exceptional collection is defined as
\[ h(E_\bullet) = \min_{a_\bullet} \delta_{a_\bullet} (E_\bullet) \]

We now use the following result:
\begin{proposition}\cite{Kuz12} 
 Let $\AA$ be right orthogonal to exceptional
collection $E_\bullet$. For $k \leq h(E_\bullet) + (\dim S - 2)$ the natural map
$HH^k(S) \to HH^k(\AA)$ is isomorphism.
\end{proposition}

For our helices we have
\[\bal
h(\HH_1) &\= 2 \\
h(\HH_2) &\= 1 \\
\eal\]
and hence we see that $HH^0(\AA_i) = HH^0(S) = \C$.

\end{proof}

\providecommand{\arxiv}[1]{\href{http://arxiv.org/abs/#1}{\tt arXiv:#1}}

\end{document}